\setlist[itemize]{noitemsep, nolistsep}
\newtheorem{theorem}{Theorem}[section]
\newtheorem{proposition}[theorem]{Proposition}
\newtheorem{remark}[theorem]{Remark}
\newtheorem{definition}[theorem]{Definition}
\numberwithin{equation}{section}
\def\dx{\,\mathrm{d}\bx} 
\def\dS{\,\mathrm{d}S}
\def\dV{\,\mathrm{d}V}
\def\Deltax{\Delta_{\bx}}
\def\nablax{\nabla_{\bx}}
\def\QT{Q_T}
\newcommand\QTrefftz[2]{\mathbb{Q}\mathbb{T}_{#1}\left(#2\right)}
\DeclareMathOperator{\spn}{span}
\def\calS{\mathcal{S}}
\newcommand{\dfdt}[1]{\partial_t{#1}}
\newcommand{\EFC}[2]{\calC^{#1}\left({#2}\right)}
\newcommand{\ESOBOLEV}[2]{H^{#1}\left({#2}\right)}
\newcommand{\DerA}[2]{D^{#1}{#2}}
\newcommand{\Taylor}[3]{T_{#1}^{#2}\left[#3\right]}
\newcommand{\ETaylor}[3]{\widetilde{T}_{#1}^{#2}\left[#3\right]}
\newcommand{\AvTaylor}[2]{\calQ^{#1}\left[#2\right]}
\newcommand{\EAvTaylor}[2]{\widetilde{\calQ}^{#1}\left[#2\right]}
\newcommand{\Conjugate}[1]{\overline{#1}}
\newcommand{\jump}[1]{\left[\!\left[#1\right]\!\right]}
\newcommand{\ORDER}[1]{{\mathcal O}\left(#1\right)}
\numberwithin{equation}{section}
\newlength{\dhatheight}
\newcommand{\normalDer}{\partial_{\mathbf{n_x}}}
\newcommand{\nF}{\vec{\bn}_F}
\newcommand{\nFt}{n_F^t}
\newcommand{\nFx}{\vec{\bn}_F^{\bx}}
\newcommand{\Uu}[1]{{\mathbf{#1}}}
\newcommand{\bj}{{\boldsymbol j}}
\newcommand{\bjx}{{\boldsymbol j_\bx}}
\newcommand{\bjxl}{j_{x_\ell}}
\newcommand{\bT}{{\Uu T}}
\newcommand{\bTp}{{\mathbb{T}}}
\newcommand{\calC}{{\mathcal C}}
\newcommand{\calT}{{\mathcal T}}
\newcommand{\calF}{{\mathcal F}}
\newcommand{\calQ}{{\mathcal Q}}
\newcommand{\Fh}{\calF_h}
\newcommand{\Kx}{K_{\bx}}
\newcommand{\Kt}{K_t}
\newcommand{\hK}{h_{K}}
\newcommand{\hKx}{h_{K_\bx}}
\newcommand{\hKt}{h_{K_t}}
\newcommand{\Th}{{\calT_h}}
\newcommand{\deK}{{\partial K}}
\newcommand{\uhp}{{\psi_{hp}}}
\newcommand{\uhpT}{{\psi_{hp}^{-}}}
\newcommand{\shp}{{s_{hp}}}
\newcommand{\cs}{{\conj{s_{hp}}}}
\newcommand{\IN}{\mathbb{N}}
\newcommand{\IR}{\mathbb{R}}
\newcommand{\po}{\partial \Omega}
\newcommand{\gD}{{g_{\mathrm D}}}
\newcommand{\oon}{\;\text{on}\;}
\newcommand{\bx}{{\Uu x}}
\newcommand{\bn}{{\Uu n}}
\newcommand{\bN}{{\Uu N}}
\newcommand{\bz}{{\Uu z}}
\newcommand{\IC}{\mathbb{C}}
\newcommand{\IP}{\mathbb{P}}
\newcommand{\IT}{\mathbb{T}}
 \newcommand{\mvl}[1]{\left\{ \!\left\{#1\right\}\!\right\}}  
 \newcommand{\FT}{{\Fh^T}}
\newcommand{\FO}{{\Fh^0}}
\newcommand{\FD}{{\Fh^{\mathrm D}}}
\newcommand{\rtime}{{\mathrm{time}}}
\newcommand{\rspace}{{\mathrm{space}}}
\newcommand{\Fspa}{{\Fh^\rspace}}
\newcommand{\Ftime}{{\Fh^\rtime}}
\newcommand{\bThp}{{\IT_{hp}}} 
\newcommand*{\conj}[1]{\overline{#1}}
\DeclareMathOperator{\diam}{diam} 
\DeclareMathOperator{\esssup}{ess\,sup}
\DeclareMathOperator{\essinf}{ess\,inf}
\newcommand{\cbA}[2]{{{\mathcal{A}}}\left({#1};\ {#2}\right)}
\newcommand{\be}{\boldsymbol{e}}
\newcommand*{\abs}[1]{\left|#1\right|}
\newcommand{\Tnorm}[2]{|||#1|||_{#2}}
\newcommand*{\Norm}[2]{\left\|#1\right\|_{#2}}
\newcommand{\DG}{_{\mathrm{DG}}}
\newcommand{\DGp}{_{\mathrm{DG^+}}}
\newcommand{\deO}{{\partial\Omega}}
\newcommand{\tr}{\mathrm{tr}}
\title{On polynomial Trefftz spaces for the linear time-dependent Schr\"odinger equation\thanks{The authors have been funded by the Austrian Science Fund (FWF) through the projects F~65
and P~33477 (I. Perugia),
by the 
Italian 
Ministry of University and Research 
through the PRIN project ``NA-FROM-PDEs'', from PNRR-M4C2-I1.4-NC-HPC-Spoke6.
(A. Moiola, S. G\'omez),
and by DFG SFB 1456 project 432680300 (P. Stocker). S. G\'omez acknowledges the kind hospitality of the Erwin Schr\"odinger International Institute for Mathematics and Physics (ESI), where part of this research was developed.}}
\author{\large{Sergio G\'omez\footnotemark[3]\;\thanks{Erwin Schr\"odinger Institute for Mathematics and Physics, University of Vienna.},\; Andrea Moiola\thanks{Department of Mathematics, University of Pavia, Via Ferrata 5, 27100 Pavia, Italy (sergio.gomez01@universitadipavia.it, andrea.moiola@unipv.it)},\; Ilaria Perugia\thanks{Faculty of Mathematics, University of Vienna,  Oskar-Morgenstern-Platz 1, 1090, Vienna, Austria (ilaria.perugia@univie.ac.at)},\; Paul Stocker\thanks{Institut f\"ur Numerische und Angewandte Mathematik, University of G\"ottingen, Lotzestr. 16-18, 37083 G\"ottingen, Germany (p.stocker@math.uni-goettingen.de)}}}
\begin{document}

\maketitle

\begin{abstract}
\noindent We study the approximation properties of complex-valued polynomial Trefftz spaces for the~$(d+1)$-dimensional linear time-dependent Schr\"odinger equation. More precisely, we prove that for the space--time Trefftz discontinuous Galerkin variational formulation proposed by G\'omez, Moiola~(SIAM. J. Num. Anal. 60(2): 688--714, 2022), the same~$h$-convergence rates as for polynomials of degree~$p$ in~$(d + 1)$ variables can be obtained in a mesh-dependent norm by using a space of Trefftz polynomials of anisotropic degree.
For such a space, the dimension is equal to that of the space of polynomials of degree~$2p$ in~$d$ variables, and bases are easily constructed. 
\end{abstract}

\medskip\noindent
\textbf{Keywords}: Schr\"odinger equation, ultra-weak formulation, discontinuous Galerkin method, extended Taylor polynomials, polynomial Trefftz space.

\section{Introduction\label{SEC::INTRODUCTION}}
 \noindent Trefftz methods are characterized by 
 constructing approximation spaces that lie in the kernel of the target differential operator.
 This leads to a substantial reduction in the total number of degrees of freedom without loss of optimal approximation properties, and can be used to remove the volume terms in the variational formulation. 
 Trefftz discontinuous Galerkin (Trefftz-DG) variational formulations have been designed for several models, see e.g.~\cite{Qin05,Hiptmair_Moiola_Perugia_2016,Hiptmair_Moiola_Perugia_2013,Egger_Kretzchmar_Schnepp_Weiland_2015,Banjai_Georgoulis_Lijoka_2017,Moiola_Perugia_2018,Gomez_Moiola_2022}. Typically, well-posedness and quasi-optimality are proven for such formulations on very general discrete Trefftz spaces. Nonetheless, \emph{a priori} error estimates depend on the specific choice of the discrete Trefftz space, which is desired to possess good approximation properties.
 In fact, one of the main advantages of Trefftz methods is that, 
 for BVPs with zero volume source term,
 they allow for spaces with the same asymptotic accuracy as standard full polynomial spaces, but with much smaller dimension. Such a property translates into a substantial reduction in the total number of degrees of freedom. When all the derivatives in the differential operator are of the same order (e.g., the Laplace's equation, the wave equation, the time-dependent Maxwell's equations), the space of Trefftz polynomials of a certain maximum degree delivers the same convergence rates as the full polynomial space of the same degree~\cite[Lemma 1]{Moiola_Perugia_2018}. This is due to the  fact that the averaged Taylor polynomial of the exact solution belongs to the polynomial Trefftz space, thus ensuring good approximation properties. On the contrary, when the differential operator includes derivatives of different orders (e.g., the Helmholtz's equation, the time-harmonic Maxwell's equation, the Schr\"odinger equation), non-polynomial spaces have been used in the literature, as no Trefftz subspace of polynomials of a given degree delivers the same accuracy as the full polynomial space of the same degree. For their simple shape and closed-form integration formulas, the preferred type of non-polynomial Trefftz spaces for time-harmonic problems are plane waves, see~\cite{Hiptmair_Moiola_Perugia_2013,Gomez_Moiola_2022}. Unfortunately, the use of plane waves Trefftz spaces brings some consequences: ill-conditioned matrices in the resulting linear system; lack of general
tools for their analysis, and therefore need of developing novel tools for each problem; dependence of the design of such spaces on the physical dimension of the problem; dimension-dependent conditions on the parameters defining their basis functions in order to preserve accuracy. The use of polynomial Trefftz spaces would mitigate or overcome these issues.

In this work, we show that optimal \emph{a priori} error estimates with \emph{polynomial} Trefftz spaces can be obtained for a Trefftz-DG discretization of a PDE that includes derivatives of different orders, namely the time-dependent Schr\"odinger equation in~$d$ space dimensions. For the variational formulation in~\cite{Gomez_Moiola_2022},
we prove that the same asymptotic accuracy as for full polynomials of maximum degree~$p$ in~$\IR^{d+1}$, is obtained if Trefftz polynomials of degree~$2p$ in~$\IR^{d+1}$ are used.
We also propose a practical way to construct an explicit basis for the polynomial Trefftz space in arbitrary dimensions and prove that its dimension is equal to that of the space of polynomials of degree~$2p$ in~$\IR^d$.

We focus on the dimensionless linear Schr\"odinger equation on the space--time cylinder~$\QT := \Omega \times I$, where~$\Omega \subset \IR^d$ ($d\in\IN$) is an open,  
bounded polytopic domain with Lipschitz boundary~$\po$, and~$I =
(0, T)$ for 
some final time $T > 0$
\begin{equation}
\label{EQN::SCHRODINGER-EQUATION}
\begin{split}
\calS \psi := i \dfdt{\psi} + \frac{1}{2}\Deltax \psi = 0  & \quad \mbox{ in }\ \QT,\\
\psi = \gD   & \quad \oon\ \deO\times I,\\
\psi(\bx, 0) = \psi_0(\bx)  & \quad \oon\ \Omega.
\end{split}
\end{equation}
Here~$i$ is the imaginary unit, and the Dirichlet~($\gD$) and initial condition~($\psi_0$) data are given functions.

\section{Ultra-weak Trefftz discontinuous Galerkin formulation\label{SECT::ULTRA-WEAK-DG}}
\noindent Let~$\Th$ be a non-overlapping prismatic partition of~$\QT$, i.e., each element $K \in \Th$ can be written as~$K = \Kx \times \Kt$ for a~$d$-dimensional polytope~$\Kx \subset \Omega$ and a time interval~$\Kt \subset I$.
We adopt the notation in~\cite[Sect. 2.1]{Gomez_Moiola_2023},  $\hKx = \diam(\Kx)$, $\hKt = \abs{\Kt}$ and $\hK = \diam(K)=(\hKx^2+\hKt^2)^{1/2}$.
Any intersection $F=\deK_1\cap \deK_2$ or $F=\deK_1\cap\partial Q_T$, for
$K_1,K_2\in\Th$, that has positive $d$-dimensional measure and is contained in a $d$-dimensional hyperplane is called ``mesh facet".
We denote by~$\nF = (\nFx, \nFt) \in \IR^{d+1}$ one of the two unit normal vectors orthogonal to~$F$, with either~$\nFt = 0$ or $\nFt = 1$.
We assume that each internal mesh facet $F$ is either
a space-like facet (if~$\nFt = 1$), or 
a time-like facet (if~$\nFt = 0$).
We further denote by~$ \Fspa$ and~$\Ftime$ the union of all the internal space-like and time-like facets, respectively, and by 
\begin{equation*}
\Fh := \bigcup_{K \in \Th} \deK,\quad
\FO := \Omega \times \left\{0\right\}, \quad 
\FT := \Omega \times \left\{T\right\}, \quad 
\FD := \deO  \times (0, T).
\end{equation*}

\noindent Given a finite-dimensional subspace~$\bThp\left(\Th\right)$ of the Trefftz space
$$\bT(\Th) := \prod_{K \in \Th} \bT(K), \quad \bT(K) := \left\{w \in \ESOBOLEV{1}{\Kt; L^2(\Kx)} \cap L^2\left(\Kt;
\ESOBOLEV{2}{\Kx}\right) : i\dfdt w + \frac12 \Deltax w  = 0\right\},
$$
by employing the standard DG notation for the averages~$\mvl{\cdot}$ and 
space~$\jump{\cdot}_{\bN}$ and time~$\jump{\cdot}_t$ jumps for 
piecewise smooth complex-valued scalar and vector fields, the ultra-weak Trefftz-DG variational formulation for the Schr\"odinger equation introduced in~\cite{Gomez_Moiola_2022} (see also~\cite{Gomez_Moiola_2023} for a quasi-Trefftz version)  reads 
\begin{equation}
\label{EQN::VARIATIONAL-DG}
\mbox{Seek }\uhp \in \bThp(\Th) \mbox{ such that} \quad  \cbA{\uhp}{\shp} = 
\ell(\shp) \quad \forall \shp \in \bThp(\Th),
\end{equation}
where
\begin{align*}
\cbA{\uhp}{\shp}  := & 
i \left(\int_{\Fspa} \uhpT \jump{\cs}_t  \dx + \int_{\FT}  
\uhp \cs \dx \right) \\
&  + \frac{1}{2} \int_{\Ftime} \Bigg(\mvl{\nablax \uhp} \cdot \jump{\cs}_{\bN} + i \alpha 
\jump{\uhp}_{\bN} \cdot \jump{\cs}_{\bN} 
 \\
&  - \mvl{\uhp} \jump{\nablax
\cs}_{\bN}  + i \beta 
\jump{\nablax \uhp}_{\bN} \jump{\nablax \cs}_{\bN}\Bigg) \dS  \\
& + \frac{1}{2}  \int_{\FD} 
\left(\normalDer \uhp + i \alpha
\uhp \right)\cs \dS,  \\
\ell(\shp)  := & i \int_{\FO}  \psi_0 \cs \dx + \frac{1}{2} \int_{\FD} \gD \left(\normalDer  
\cs + i \alpha\cs\right) \dS,
\end{align*}
where~$\Conjugate{\, \cdot\, }$ denotes the complex conjugate, and the mesh-dependent stabilization functions~$\alpha$ and~$\beta$ are set as

\begin{equation*}
\begin{gathered}
\alpha|_{F} = h_{F_\bx}^{-1}
\quad \forall F \subset \Ftime\cup\FD, 
\qquad \beta|_{F} = h_{F_\bx} \quad \forall F \subset \Ftime,
\end{gathered}
\end{equation*} 
with
\begin{equation*}
\begin{cases}
h_{F_\bx}=h_{\Kx} &  \text{if } F\subset \deK\cap \FD,\\
\min\{h_{\Kx^1},h_{\Kx^2}\}\le h_{F_\bx}\le
\max\{h_{\Kx^1},h_{\Kx^2}\} & \text{if } F=K^1 \cap K^2 \subset \Ftime.
\end{cases}
\end{equation*}
The sesquilinear form~$\cbA{\cdot}{\!\cdot}$ in the variational formulation~\eqref{EQN::VARIATIONAL-DG} induces the following norms on~$\bT(\Th)$
\begin{align*}
\Tnorm{w}{\DG}^2  : =  &\, \frac12 \Big(\Norm{\jump{w}_t}{L^2(\Fspa)}^2 + 
\Norm{w}{L^2(\FT \cup \FO)}^2  + \Norm{\alpha^\frac12\jump{w}_{\bN}}{L^2(\Ftime)^d}^2 \\
& \qquad + \Norm{\beta^\frac12 \jump{\nablax w}_{\bN}}{L^2(\Ftime)}^2 + \Norm{\alpha^\frac12w}{L^2(\FD)}^2 \Big),
\\
\Tnorm{w}{\DGp}^2  : = &\, \Tnorm{w}{\DG}^2 + \frac12\Big( \Norm{w^-}{L^2(\Fspa)}^2 + \Norm{\alpha^{-\frac12}\mvl{\nablax w}}{L^2(\Ftime)^d}^2 \\
& \qquad + \Norm{\alpha^{-\frac12} \normalDer  
w }{L^2(\FD)} + \Norm{\beta^{-\frac12}\mvl{w}}{L^2(\Ftime)}^2\Big).
\nonumber
\end{align*}

In what follows we use the standard multi-index notation for partial derivatives and monomials, adapted to the space--time setting, see~\cite[Sect.~4.1]{Gomez_Moiola_2022} for more details. Henceforth, we also assume that:
\begin{itemize}
\item {\bf Uniform star-shapedness}: 
There exists~$0 < \rho \leq \frac{1}{2}$ such that, each element~$K \in \Th$ is star-shaped with respect to the ball~$B:= B_{\rho \hK}(\bz_K, s_K)$ centered at~$(\bz_K, s_K) \in K$ and
with radius~$\rho \hK$.
\item {\bf Local quasi-uniformity in space}: there exists a number~$\mathsf{lqu}(\Th)>0$ such that~$h_{\Kx^1}\le h_{\Kx^2}\, \mathsf{lqu}(\Th)$ for all~$K^1 = \Kx^1 \times \Kt^1 ,K^2 = \Kx^2 \times \Kt^2\in\Th$ such that~$K^1\cap K^2$ 
has positive~$d$-dimensional measure.
\end{itemize}

We recall the following two results from~\cite{Gomez_Moiola_2022}, which are valid for any choice of the discrete Trefftz space~$\bThp(\Th)$.

\begin{theorem}[{Quasi-optimality \cite[Thm.~3.4]{Gomez_Moiola_2022}}] \label{THEOREM::WELL-POSEDNESS} 
For any finite-dimensional subspace~$\bThp(\Th)$ of~$\bT(\Th)$, there exists 
a unique function~$\uhp \in \bThp(\Th)$ satisfying the variational formulation
\eqref{EQN::VARIATIONAL-DG}. Additionally, the following 
quasi-optimality bound is satisfied:
\begin{equation*}
\label{EQN::QUASI-OPTIMALITY}
\Tnorm{\psi - \uhp}{\DG} \leq 3 \inf_{\shp \in \bThp(\Th)}\Tnorm{\psi - 
\shp}{\DGp}.
\end{equation*}
\end{theorem}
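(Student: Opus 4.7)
My plan is to follow the standard coercivity-plus-continuity recipe for DG schemes, exploiting the Trefftz property to avoid volume terms entirely. The core computation is an element-wise integration by parts: for $w, v \in \bT(\Th)$, multiplying $\calS w = 0$ by $\conj{v}$ and integrating on each $K \in \Th$ in space and time produces only boundary and interface contributions, since the volume term cancels by the Trefftz property of $w$. Re-organizing these contributions via jump/average identities across time-like and space-like facets rewrites $\cbA{w}{v}$ in a form tailored to the DG-norm.

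Setting $v = w$ in this rewriting, the skew-symmetric (purely imaginary) parts cancel pairwise and the symmetric part collapses to the identity
\[
\re\,\cbA{w}{w} = \Tnorm{w}{\DG}^2 \qquad \forall w \in \bT(\Th).
\]
From this coercivity bound, uniqueness on the finite-dimensional space $\bThp(\Th)$ is immediate: if $\cbA{w}{\shp} = 0$ for all $\shp \in \bThp(\Th)$, then $\Tnorm{w}{\DG} = 0$. This forces $w \equiv 0$, because a Trefftz function with vanishing space-like jumps and vanishing traces on $\FO$ and $\FD$ is a classical solution of the Schr\"odinger initial-boundary value problem with zero data, hence identically zero. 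Existence then follows from uniqueness on a finite-dimensional space by linear algebra.

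Next I would establish Galerkin orthogonality: the exact solution $\psi$ of~\eqref{EQN::SCHRODINGER-EQUATION} has no interior jumps, satisfies $\psi(\cdot,0) = \psi_0$ on $\FO$ and $\psi = \gD$ on $\FD$, so the same integration-by-parts identity yields $\cbA{\psi}{\shp} = \ell(\shp)$ for every $\shp \in \bThp(\Th)$. Subtracting~\eqref{EQN::VARIATIONAL-DG} gives $\cbA{\psi - \uhp}{\shp} = 0$ on the discrete test space.

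The technical heart of the proof is the continuity estimate $|\cbA{v}{\shp}| \leq 2 \Tnorm{v}{\DGp}\Tnorm{\shp}{\DG}$ for sufficiently regular $v$ and $\shp \in \bThp(\Th)$. The stabilization weights $\alpha = h_{F_\bx}^{-1}$ and $\beta = h_{F_\bx}$ are designed precisely so that each facet term in $\cbA{\cdot}{\cdot}$ splits into a factor controlled by the DG$^+$-seminorms of $v$ (averages of $v$ and $\nablax v$ on $\Ftime$, $v^-$-traces on $\Fspa$, $\normalDer v$ on $\FD$) against a complementary factor controlled by the DG-seminorms of $\shp$, with $\alpha^{\pm 1/2}$, $\beta^{\pm 1/2}$ balancing exactly; five applications of Cauchy--Schwarz close the bound. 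Combining everything via a Strang-type argument, setting $\xi := \uhp - \shp \in \bThp(\Th)$,
\[
\Tnorm{\xi}{\DG}^2 = \re\,\cbA{\xi}{\xi} = \re\,\cbA{\psi - \shp}{\xi} \leq 2\,\Tnorm{\psi - \shp}{\DGp}\Tnorm{\xi}{\DG},
\]
hence $\Tnorm{\xi}{\DG} \leq 2\,\Tnorm{\psi - \shp}{\DGp}$, and the triangle inequality combined with $\Tnorm{\cdot}{\DG} \leq \Tnorm{\cdot}{\DGp}$ produces the factor $3$. The main obstacle lies in the continuity step: pairing up the right powers of $\alpha$ and $\beta$ across all families of facet terms so that the constant is exactly $2$, and in ensuring that the face-wise choice of $h_{F_\bx}$ on interior time-like faces is compatible with the local quasi-uniformity assumption, so that $\alpha$ and $\beta$ are unambiguously defined and symmetric across each facet.
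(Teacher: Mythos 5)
The paper does not prove this theorem; it is recalled verbatim from \cite[Thm.~3.4]{Gomez_Moiola_2022}, and your outline reproduces the standard argument given there (coercivity in the $\Tnorm{\cdot}{\DG}$ norm, the fact that $\Tnorm{\cdot}{\DG}$ is a norm on $\bT(\Th)$ via a forward-in-time uniqueness argument, Galerkin orthogonality, continuity with constant $2$, and $3=1+2$), so the approach is essentially the same. One correction: with the sesquilinear form as written (leading factor $i$ on the space-like and final-time terms, penalties $+i\alpha$ and $+i\beta$), the coercivity identity is $\im \cbA{w}{w}=\Tnorm{w}{\DG}^2$, not $\re \cbA{w}{w}$; the real part contains the non-sign-definite cross terms $\mvl{\nablax w}\cdot\jump{\conj w}_{\bN}-\mvl{w}\jump{\nablax \conj w}_{\bN}$, which do not cancel. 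This does not affect the rest of your argument, since $\Tnorm{\xi}{\DG}^2=\im\cbA{\xi}{\xi}=\im\cbA{\psi-\shp}{\xi}\le \abs{\cbA{\psi-\shp}{\xi}}$ closes the Strang step identically.
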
 

\begin{proposition}[{\cite[Prop.~4.11]{Gomez_Moiola_2022}}]\label{PROP::DGp-BOUND}
For all~$\varphi\in\bT(\Th)$,
we have
\begin{align*}
\Tnorm{\varphi}{\DGp}^2 \leq  3C_{\tr} 
 \!\!\!\!\!\! \sum_{K = \Kx \times \Kt \in \Th} \Bigg[&
\left(\hKt^{-1} + \mathrm{a}_K^2 \hKx^{-1}\right) \Norm{\varphi}{L^2(K)}^2 + \hKt
\Norm{\partial_t \varphi}{L^2(K)}^2 \\
& + \left(\mathrm{a}^2_K \hKx + \mathrm{b}_K^2 \hKx^{-1}\right)
\Norm{\nablax \varphi}{L^2(K)^d}^2  + 
\mathrm{b}_K^2 \hKx \Norm{D_{\bx}^2 \varphi}{L^2(K)^{d\times d}}^2\Bigg],
\end{align*}
where
\[
\begin{tabular}{ll}  
& $\mathrm{a}_K := \max\Bigg\{\Bigg(\underset{\partial K \cap (\Ftime \cup 
\FD)}{\esssup}\alpha \Bigg)^{1/2},\ 
\Bigg(\underset{\partial K \cap \Ftime}{\essinf} \, \beta \Bigg)^{-1/2}\Bigg\},$ \\
& $\mathrm{b}_K := \max\Bigg\{\Bigg(\underset{\partial K \cap (\Ftime \cup 
\FD)}{\essinf} \alpha \Bigg)^{-1/2},\ 
\Bigg(\underset{\partial K \cap \Ftime}{\esssup}\, \beta \Bigg)^{1/2}\Bigg\}$.
\end{tabular}
\]
\end{proposition}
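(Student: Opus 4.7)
The plan is to bound each boundary-type quantity appearing in $\Tnorm{\varphi}{\DGp}^2$ elementwise by a volume integral on the adjacent elements, and then sum over $\Th$. The key tools are the standard multiplicative trace inequalities on a prismatic element $K = \Kx \times \Kt$: thanks to the uniform star-shapedness assumption, there exists $C_{\tr}>0$ depending only on $\rho$ and $d$ such that
\[
\Norm{\varphi}{L^2(\Kx \times \{t\})}^2 \leq C_{\tr}\bigl(\hKt^{-1}\Norm{\varphi}{L^2(K)}^2 + \hKt\Norm{\partial_t \varphi}{L^2(K)}^2\bigr)
\]
for any $t\in \overline{\Kt}$, and
\[
\Norm{\varphi}{L^2(\partial\Kx \times \Kt)}^2 \leq C_{\tr}\bigl(\hKx^{-1}\Norm{\varphi}{L^2(K)}^2 + \hKx\Norm{\nablax \varphi}{L^2(K)^d}^2\bigr).
\]
Applied componentwise to $\nablax \varphi$, the second inequality also yields traces of the gradient in terms of $\nablax \varphi$ and $D_{\bx}^2 \varphi$.

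I would then treat each term of $\Tnorm{\varphi}{\DGp}^2$ in turn. The space-like jumps on $\Fspa$ and the traces on $\FO \cup \FT$ are controlled by the time-slice trace inequality, which delivers the coefficients $\hKt^{-1}$ and $\hKt$ in front of $\Norm{\varphi}{L^2(K)}^2$ and $\Norm{\partial_t \varphi}{L^2(K)}^2$. The weighted jumps and averages across $\Ftime$ and the Dirichlet-boundary contributions on $\FD$ are handled by the spatial trace inequality; on each facet $F \subset \partial K$ one factors out the weight $\alpha^{\pm 1/2}$ or $\beta^{\pm 1/2}$ via the appropriate $\esssup$ or $\essinf$ on $F$, and these are exactly the quantities whose maxima define $\mathrm{a}_K^2$ (for $\alpha$ in the numerator or $\beta$ in the denominator) and $\mathrm{b}_K^2$ (symmetrically). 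The local quasi-uniformity hypothesis allows us to identify $h_{F_\bx}$ with $\hKx$ up to a uniform constant, so the $\hKx^{\pm 1}$ appearing inside the trace inequality matches the statement. After summing over $K \in \Th$, and noting that each internal facet is shared by exactly two elements, the various contributions collect into $\hKt^{-1} + \mathrm{a}_K^2 \hKx^{-1}$ in front of $\Norm{\varphi}{L^2(K)}^2$, $\hKt$ in front of $\Norm{\partial_t \varphi}{L^2(K)}^2$, $\mathrm{a}_K^2 \hKx + \mathrm{b}_K^2 \hKx^{-1}$ in front of $\Norm{\nablax \varphi}{L^2(K)^d}^2$, and $\mathrm{b}_K^2 \hKx$ in front of $\Norm{D_{\bx}^2 \varphi}{L^2(K)^{d\times d}}^2$, the counting multiplicities being absorbed into the constant $3 C_{\tr}$.

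The most delicate piece of bookkeeping is the matching of these mixed weights. The trick encoded in the definitions of $\mathrm{a}_K$ and $\mathrm{b}_K$ is to take a maximum over an $\alpha$-type and a $\beta$-type contribution; this is what makes a single constant control both, say, an $\alpha^{1/2}$-weighted jump of $\varphi$ and a $\beta^{-1/2}$-weighted average of $\varphi$ on the same facet. Checking that this packaging really produces the two clean prefactors $\mathrm{a}_K^2 \hKx + \mathrm{b}_K^2 \hKx^{-1}$ for $\Norm{\nablax \varphi}{L^2(K)^d}^2$ (rather than a cumbersome sum of four terms) is the main obstacle; once it is in place, the rest of the proof is a routine application of the trace inequalities above and a summation over the mesh.
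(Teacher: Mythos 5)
The paper states this proposition without proof, citing \cite[Prop.~4.11]{Gomez_Moiola_2022}, and your outline reproduces essentially the argument used there: elementwise multiplicative trace inequalities on the prism $K=\Kx\times\Kt$ (in time for the $\Fspa$, $\FO$, $\FT$ terms, in space for the $\Ftime$, $\FD$ terms), with the weights $\alpha^{\pm1/2}$, $\beta^{\pm1/2}$ absorbed into $\mathrm{a}_K$ and $\mathrm{b}_K$ via the indicated $\esssup$/$\essinf$, followed by summation over the mesh. The only inessential slip is the appeal to local quasi-uniformity: it is not needed for this bound, since $\mathrm{a}_K$ and $\mathrm{b}_K$ are defined directly through the extremal values of $\alpha$ and $\beta$ on $\partial K$ (that hypothesis only enters later, when $\mathrm{a}_K$ and $\mathrm{b}_K$ are estimated in terms of $\hKx$).
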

From the above results, 
it is possible to derive error estimates for the Trefftz-DG method~\eqref{EQN::VARIATIONAL-DG} by simply studying the approximation properties of the local discrete spaces~$\bThp(K)$ for functions in~$\bT(\Th)$. In particular, our analysis for the polynomial Trefftz space relies on the approximation properties of some Taylor-type polynomials. 

On an open and bounded set~$\Upsilon \subset\IR^{d+1}$, we denote by~$\Taylor{(\bz,s)}{m}{\varphi}$
the Taylor polynomial of order~$m\in\IN$ (and degree~$m - 1$), centered at some~$(\bz,s)\in \Upsilon$,
of a given  function~$\varphi \in \EFC{m - 1}{\Upsilon}$. We also denote by~$\AvTaylor{m}{\varphi}$ the averaged Taylor polynomial of order~$m$ of~$\varphi \in \ESOBOLEV{m-1}{\Upsilon}$, see~\cite[Sect.~4.1]{Brenner_Scott_2007}.

Given~$p \in \IN$, let~$\IP^p_{\bT}(K) := \bT(K) \cap \IP^p(K) $ be the space of Trefftz polynomials of maximum degree~$p$. From the above results,  
it would be desirable to prove that, for all~$\psi \in \bT(K)$, either~$\Taylor{(\bz, s)}{p + 1}{\psi} \in \IP^p_{\bT}(K)$ for some~$(\bz, s) \in K$ or~$\AvTaylor{p + 1}{\psi} \in \IP^p_{\bT}(K)$, as this would be enough to achieve optimal~$h$-convergence rates for sufficiently smooth solutions. Unfortunately, this is not true; e.g., for~$d = 1$ and~$p = 1$ the function~$\psi(x, t) = \exp\left(x + \frac{it}{2}\right)$ satisfies the Schr\"odinger equation~\eqref{EQN::SCHRODINGER-EQUATION}; however, $\Taylor{(0, 0)}{2}{\psi} = 1 + x + \frac{it}{2}$ does not belong to~$\IP^1_{\bT}(K)$. Instead, in Proposition~\ref{PROP::EXTENDED-TAYLOR-POLYNOMIALS} below, we show that the following extended Taylor polynomials belong to~$\IP^{2p}_{\bT}(K)$.

\begin{definition}[Extended Taylor polynomial]
Given~$p\in \IN$, and an open and bounded set~$\Upsilon = \Upsilon_\bx \times \Upsilon_t \subset \IR^{d+1}$, for each complex-valued~$\varphi \in \EFC{p}{\Upsilon_t; \EFC{2p}{\Upsilon_\bx}}$ we define its extended Taylor polynomial of degree~$2p$ centered at some~$(\bz, t) \in \Upsilon$ as
\begin{equation*}
\label{DEF::EXTENDED-TAYLOR}
    \ETaylor{(\bz, t)}{2p}{\varphi} = \Taylor{(\bz, t)}{p+1}{\varphi} + \!\!\!\!\!\! \sum_{
    \scriptsize
    \begin{split}
        2j_t + |\bjx| \leq 2p \\[-0.5em]
        p + 1 \leq j_t + |\bjx|
    \end{split}
    }
    \frac{1}{\bjx! j_t!}\DerA{\bj}{\varphi(\bz, s)} (\bx - \bz)^\bjx (t - s)^{j_t} \qquad \bj = (\bjx, j_t) \in \IN^{d + 1}.
\end{equation*}
\end{definition}

\begin{definition}[Extended averaged Taylor polynomial]
\label{DEF::EXTENDED-AVERAGED-TAYLOR-POLYNOMIAL}
Let~$\Upsilon = \Upsilon_\bx \times \Upsilon_t\subset \IR^{d + 1}$ be an open and bounded set, with diameter~$h_\Upsilon$, star-shaped with 
respect to the ball $B:= B_{\rho h_\Upsilon}(\bz, s)$ centered at~$(\bz, s) \in \Upsilon$ and 
with radius~$\rho h_\Upsilon$, for some $0 < \rho \leq \frac{1}{2}$. For each complex-valued~$\varphi \in 
\ESOBOLEV{p}{\Upsilon_t; \ESOBOLEV{2p}{\Upsilon_{\bx}}}$, we define its extended averaged Taylor polynomial of degree~$2p$ as
\begin{equation*}
\label{EQN::AVERAGED-TAYLOR-POLYNOMIAL}
\begin{split}
\EAvTaylor{2p}{\varphi}(\bx, t) & := 
\frac{1}{\abs{B}} \int_{B} \ETaylor{(\bz,s)}{2p}{\varphi}(\bx, t) \dV(\bz,s) \\
& = \AvTaylor{p+1}{\varphi} + \!\!\!\!\! \sum_{
    \scriptsize
    \begin{split}
        2j_t + |\bjx| \leq 2p \\[-0.5em]
        p + 1 \leq j_t + |\bjx|
    \end{split}
    }
     \frac{1}{\bjx! j_t! \abs{B}} \int_B \DerA{\bj}{\varphi(\bz, s)} (\bx - \bz)^\bjx (t - s)^{j_t} \dV(\bz, s).
\end{split}     
\end{equation*}
\end{definition}

\begin{remark}[Heat polynomials]
In the real-valued case, these extended Taylor polynomials can be used to analyze the approximation properties of the so-called heat polynomials, i.e., the polynomial solutions to the heat equation~\cite{Rosenbloom_Widder_1959}. In fact, Proposition~\ref{PROP::EXTENDED-TAYLOR-POLYNOMIALS} below can be easily extended to the heat equation. However, as the heat operator~$\mathcal{L}(\cdot) := (\dfdt - \Deltax)(\cdot)$ is not self-adjoint, the setting of~Theorem~\ref{THEOREM::WELL-POSEDNESS} cannot be used to analyze the ultra-weak DG formulation of the heat equation~\cite[Remark 2.2]{Gomez_Moiola_2023}. This is due to the fact that the corresponding bilinear form is not coercive.
\end{remark}

\begin{proposition}
\label{PROP::EXTENDED-TAYLOR-POLYNOMIALS}
    Let~$\Upsilon$ be an open and bounded set in~$\IR^{d+1}$ satisfying the assumptions in Definition~\ref{DEF::EXTENDED-AVERAGED-TAYLOR-POLYNOMIAL}. For all~$\psi \in \bT(\Upsilon)$ sufficiently regular, both~$\ETaylor{(\bz, s)}{2p}{\psi}$ and~$\EAvTaylor{2p}{\psi}$ belong to~$\IP^{2p}_{\bT}(\Upsilon)$.
\end{proposition}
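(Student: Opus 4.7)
The strategy is to verify the two defining properties of $\IP^{2p}_{\bT}(\Upsilon)$ directly: (i) $\ETaylor{(\bz,s)}{2p}{\psi}$ is a polynomial of total degree at most $2p$, and (ii) it lies in the kernel of the Schr\"odinger operator $\calS = i\dfdt{} + \frac12 \Deltax$. Property (i) is immediate once we observe that the two sums defining $\ETaylor{(\bz,s)}{2p}{\psi}$ can be merged into a single sum indexed by $\{\bj = (\bjx, j_t) \in \IN^{d+1} : 2j_t + |\bjx| \le 2p\}$, since $j_t + |\bjx| \le p$ implies $2j_t + |\bjx| \le 2p$. Any multi-index in that set satisfies $j_t + |\bjx| \le 2j_t + |\bjx| \le 2p$, so the total degree is bounded by $2p$.

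For property (ii), the plan is a short index-shift computation. Writing
\[
w := \ETaylor{(\bz,s)}{2p}{\psi}(\bx,t) = \!\!\!\! \sum_{2j_t + |\bjx| \le 2p} \!\!\! \frac{1}{\bjx!\, j_t!} \DerA{\bj}{\psi(\bz,s)}\, (\bx-\bz)^{\bjx}(t-s)^{j_t},
\]
I would compute $i\dfdt{w}$ by letting $j_t' = j_t - 1$, which turns the constraint into $2j_t' + |\bjx| \le 2p - 2$ and replaces $\DerA{(\bjx, j_t)}{\psi}$ by $\DerA{(\bjx, j_t'+1)}{\psi} = \DerA{(\bjx,j_t')}{\dfdt{\psi}}$. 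For $\frac12 \Deltax w$, I would set $\bjx' = \bjx - 2\mathbf{e}_\ell$ in each of the $d$ terms; the factorial identity $\bjx_\ell(\bjx_\ell-1)/\bjx! = 1/\bjx'!$ collapses the coefficients, the constraint again becomes $2j_t + |\bjx'| \le 2p-2$, and $\DerA{(\bjx+2\mathbf{e}_\ell, j_t)}{\psi} = \DerA{(\bjx,j_t)}{\partial_{x_\ell}^2\psi}$. After re-indexing, the two contributions combine into
\[
\calS w = \!\!\!\! \sum_{2j_t + |\bjx| \le 2p - 2} \!\!\! \frac{1}{\bjx!\, j_t!}\, \DerA{\bj}{\calS\psi(\bz,s)}\, (\bx-\bz)^{\bjx}(t-s)^{j_t}.
\]
Since $\psi \in \bT(\Upsilon)$ satisfies $\calS\psi \equiv 0$ in $\Upsilon$, every derivative at $(\bz,s)$ vanishes, giving $\calS w = 0$. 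This is the key point: the anisotropic threshold $2j_t + |\bjx| \le 2p$ is exactly the one that makes the index shifts in $\dfdt{}$ and $\Deltax$ land in the same residual set $\{2j_t + |\bjx| \le 2p - 2\}$, so that no ``boundary'' terms survive.

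The result for $\EAvTaylor{2p}{\psi}$ then follows by linearity: $\calS$ acts only in $(\bx,t)$, so it commutes with the integration over $(\bz,s) \in B$, yielding
\[
\calS \EAvTaylor{2p}{\psi}(\bx,t) = \frac{1}{|B|} \int_B \calS\bigl(\ETaylor{(\bz,s)}{2p}{\psi}\bigr)(\bx,t)\, \mathrm{d}V(\bz,s) = 0.
\]
The degree bound is preserved since $\EAvTaylor{2p}{\psi}$ is a finite linear combination of the monomials $(\bx-\bz)^{\bjx}(t-s)^{j_t}$ expanded and integrated against $(\bz,s)$, and $\bx \mapsto (\bx - \bz)^{\bjx}$ has degree $|\bjx|$ for fixed $\bz$. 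The regularity assumption on $\psi$ needed to justify the manipulations is exactly $\psi \in \EFC{p}{\Upsilon_t;\EFC{2p}{\Upsilon_\bx}}$ for the pointwise version and $\ESOBOLEV{p}{\Upsilon_t;\ESOBOLEV{2p}{\Upsilon_\bx}}$ for the averaged one, matching Definition~\ref{DEF::EXTENDED-AVERAGED-TAYLOR-POLYNOMIAL}. The only delicate step is the bookkeeping of the factorials and index ranges under the two shifts; once those line up, the vanishing of $\calS\psi$ closes the argument.
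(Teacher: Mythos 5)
Your proof is correct, and it exploits the same underlying mechanism as the paper's: the anisotropic index set $\{2j_t+\abs{\bjx}\le 2p\}$ is carried into the common residual set $\{2j_t+\abs{\bjx}\le 2p-2\}$ by both the $\partial_t$-shift and the $\partial_{x_\ell}^2$-shift, so that applying $\calS$ leaves only terms proportional to $\DerA{\bj}{\calS\psi}(\bz,s)=0$. Your organization, however, is genuinely cleaner. You first observe that $\Taylor{(\bz,s)}{p+1}{\psi}$ and the correction sum merge into a single sum over $\{2j_t+\abs{\bjx}\le 2p\}$ (valid since $j_t+\abs{\bjx}\le p$ implies $2j_t+\abs{\bjx}\le 2p$, and the two index sets partition the anisotropic one), and then compute $\calS\,\ETaylor{(\bz,s)}{2p}{\psi}$ in a single pass of re-indexing. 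The paper instead keeps the splitting: it first derives an identity expressing $-2i\partial_t \Taylor{(\bz,s)}{p+1}{\psi}$ as $\Deltax \Taylor{(\bz,s)}{p+1}{\psi}$ plus leftover terms with $j_t+\abs{\bjx}=p-1$, and then shows that the Laplacian of the correction sum reproduces exactly those leftovers together with $-2i$ times the time derivative of the correction sum; the cancellation is the same, but the bookkeeping is heavier, and your route makes more transparent why $2j_t+\abs{\bjx}\le 2p$ is precisely the right threshold. Your treatment of $\EAvTaylor{2p}{\psi}$ by commuting $\calS$ (which acts only in $(\bx,t)$) with the integral over $B$ matches the paper's remark that the identity is independent of the center point. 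The one point worth a sentence in a polished write-up is that, under only the Sobolev regularity of Definition~\ref{DEF::EXTENDED-AVERAGED-TAYLOR-POLYNOMIAL}, the pointwise values $\DerA{\bj}{\psi}(\bz,s)$ are to be read inside the $L^1(B)$ average and the differentiation under the integral sign concerns only the polynomial variables $(\bx,t)$; this is standard, and the paper's ``sufficiently regular'' glosses over it in the same way.
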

\begin{proof}
Denote by~$\{\be_\ell\}_{\ell = 1}^d $ the canonical basis of~$\IR^d$. Since~$\psi \in \bT(\Upsilon)$, we have that 
\begin{align}\label{eq:tsch}
\sum_{\ell = 1}^d \DerA{(\bjx + 2\be_\ell, j_t)}{\psi}(\bz, s) = -2i \DerA{(\bjx, j_t + 1)}\psi(\bz, s)  \qquad \forall \bj = (\bjx, j_t) \in \IN^{d+1}, \ \abs{\bj} \geq 0.
\end{align}
Applying this to the Taylor expansion of $\psi$ and using straightforward multi-index manipulations, we get
\begin{align}
    \begin{split}\label{eq:app1}
    -2i\partial_t T^{p+1}_{(\bz,s)}[\psi](\bx,t) & \ = \sum_{\substack{\abs\bjx+j_t \leq p - 1}} \frac{-2i}{\bjx!j_t!} D^{(\bjx,j_t+1)} \psi(\bz,s) (\bx-\bz)^\bjx (t-s)^{j_t}\\ 
    &\stackrel{\eqref{eq:tsch}}{=}\sum_{\substack{\abs\bjx+j_t \leq p - 1}}\sum_{\ell = 1}^d \frac{1}{\bjx!j_t!} D^{(\bjx+2\be_\ell,j_t)} \psi(\bz,s) (\bx-\bz)^\bjx (t-s)^{j_t}  \\ 
    &\ = \Deltax T^{p+1}_{(\bz,s)}[\psi](\bx,t) + \sum_{\abs\bjx+j_t=p-1}\sum_{\ell = 1}^d \frac{1}{\bjx!j_t!} D^{(\bjx+2\be_\ell,j_t)} \psi(\bz,s) (\bx-\bz)^{\bjx} (t-s)^{j_t}.
    \end{split}
\end{align}
For the extended Taylor polynomial we have that
\begin{align*}
    \Deltax\ETaylor{(\bz,s)}{2p}{\psi}(\bx,t)
    &\ =\Deltax T^{p+1}_{(\bz,s)}[\psi](\bx,t) 
    + \sum_{\substack{2j_t+\abs\bjx\leq 2p-2\\ p-1 \leq j_t+\abs\bjx}} \sum_{\ell = 1}^d \frac{1}{\bjx!j_t!} D^{(\bjx+2 \be_\ell,j_t)} \psi(\bz,s) (\bx-\bz)^\bjx (t-s)^{j_t} \\
    &\ =\Deltax T^{p+1}_{(\bz,s)}[\psi](\bx,t) + 
    \sum_{\substack{j_t+\abs\bjx=p-1}} \sum_{\ell = 1}^d\frac{1}{\bjx!j_t!} D^{(\bjx+2 \be_{\ell},j_t)} \psi(\bz,s) (\bx-\bz)^\bjx (t-s)^{j_t} \\
    &\qquad\qquad\qquad\quad + \sum_{\substack{2j_t+\abs\bjx\leq 2p-2\\ p \leq j_t+\abs\bjx}} \sum_{\ell = 1}^d\frac{1}{\bjx!j_t!} D^{(\bjx+2 \be_{\ell} ,j_t)} \psi(\bz,s) (\bx-\bz)^\bjx (t-s)^{j_t} \\
    &\stackrel{\substack{\eqref{eq:tsch}\\ \eqref{eq:app1}}}{=} -2i\partial_t T^{p+1}_{(\bz,s)}[\psi](\bx,t) - \sum_{\substack{2j_t+\abs\bjx\leq 2p-2\\ p \leq  j_t+\abs\bjx}} \frac{2i }{\bjx!j_t!} D^{(\bjx,j_t+1)} \psi(\bz,s) (\bx-\bz)^\bjx (t-s)^{j_t} \\
    & \ =-2i\partial_t \ETaylor{(\bz,s)}{2p}{\psi}(\bx,t),
\end{align*}
and therefore $\ETaylor{(\bz, s)}{2p}{\psi}\in \bT(\Upsilon)$.
Since~$\ETaylor{(\bz, s)}{2p}{\psi}$ also belongs to~$\IP^{2p}(\Upsilon)$, then~$\ETaylor{(\bz, s)}{2p}{\psi} \in \IP^{2p}_{\bT}(\Upsilon)$. As the above identity is independent of the center point~$(\bz, s)$, the fact that~$\EAvTaylor{2p}{\psi} \in \IP^{2p}_{\bT}(\Upsilon)$ can be proven in a similar way.
\end{proof}

Combining Theorem~\ref{THEOREM::WELL-POSEDNESS}, Propositions~\ref{PROP::DGp-BOUND} and~\ref{PROP::EXTENDED-TAYLOR-POLYNOMIALS}, together with error bounds for extended averaged Taylor polynomials, which can be proven using that~$\EAvTaylor{2p}{\psi} = \AvTaylor{p+1}{\psi} + \mathcal{O}(\hK^{p+1})$, the following error estimate is obtained.

\begin{theorem}
\label{THM::ERROR-ESTIMATE-QUASI-TREFFTZ}
Given~$p\in\IN$, let~$\psi$ be the exact solution to~\eqref{EQN::SCHRODINGER-EQUATION} and~$\uhp\in \bThp(\Th)$ be the solution to the Trefftz-DG method~\eqref{EQN::VARIATIONAL-DG} with~$\bThp(\Th) := \prod_{K \in \Th} \IP^{2p}_{\bT}(K)$.
If~$\psi|_K\in \ESOBOLEV{p+1}{\Kt ; \ESOBOLEV{2p}{\Kx}}$ and~$\hKx \simeq \hKt$ for all~$K = \Kx \times \Kt \in \Th$, then there exists a positive constant~$C$ independent of the mesh size~$h$, but depending on the degree~$p$, 
the local quasi-uniformity parameter~$\mathsf{lqu}(\Th)$, and the measure of the space--time domain~$\QT$ such that
\begin{equation*}
\Tnorm{\psi - \uhp}{\DG} \leq C \sum_{K \in \Th} \max\{\hKx, \hKt\}^{p} \Norm{\psi}{\ESOBOLEV{p+1}{\Kt; \ESOBOLEV{2p}{\Kx}}}.
\end{equation*}
\end{theorem}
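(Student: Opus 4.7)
The plan is to combine the three ingredients already developed in the excerpt: the quasi-optimality bound of Theorem~\ref{THEOREM::WELL-POSEDNESS}, the local control of the $\DGp$-norm from Proposition~\ref{PROP::DGp-BOUND}, and the Trefftz property of the extended averaged Taylor polynomial from Proposition~\ref{PROP::EXTENDED-TAYLOR-POLYNOMIALS}. Quasi-optimality reduces the task to bounding $\Tnorm{\psi-\shp}{\DGp}$ for a well-chosen $\shp \in \bThp(\Th)$. I would take $\shp|_K := \EAvTaylor{2p}{\psi|_K}$ element by element: the uniform star-shapedness guarantees that Definition~\ref{DEF::EXTENDED-AVERAGED-TAYLOR-POLYNOMIAL} applies on each~$K$, while Proposition~\ref{PROP::EXTENDED-TAYLOR-POLYNOMIALS} ensures $\shp|_K \in \IP^{2p}_{\bT}(K)$, so that globally $\shp \in \bThp(\Th)$.

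Next, I would apply Proposition~\ref{PROP::DGp-BOUND} to $\varphi := \psi - \shp$ and evaluate the four weights elementwise. With $\alpha|_F = h_{F_\bx}^{-1}$ on $\Ftime \cup \FD$, $\beta|_F = h_{F_\bx}$ on $\Ftime$, and the local quasi-uniformity assumption, one obtains $\mathrm{a}_K^2 \simeq \hKx^{-1}$ and $\mathrm{b}_K^2 \simeq \hKx$ up to a factor depending only on $\mathsf{lqu}(\Th)$. Under the hypothesis $\hKx \simeq \hKt \simeq \hK$, the four weights in Proposition~\ref{PROP::DGp-BOUND} are thus controlled (for sufficiently small $\hK$) by $\hK^{-2}$, $\hK$, $1$, $\hK^{2}$, in the same order. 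It therefore remains to establish, for each $K$ and every $(k_t,\abs{\bkx}) \in \{(0,0),(1,0),(0,1),(0,2)\}$, the anisotropic approximation bound
\begin{align*}
\Norm{\partial_t^{k_t} D_\bx^{\bkx}\bigl(\psi - \EAvTaylor{2p}{\psi}\bigr)}{L^2(K)}
\;\lesssim\; \hK^{p+1-k_t-\abs{\bkx}}\,\Norm{\psi}{\ESOBOLEV{p+1}{\Kt;\ESOBOLEV{2p}{\Kx}}},
\end{align*}
since substituting these bounds, squaring, multiplying by the weights, summing over $K$, and taking the square root produces exactly the $\hK^{p}$ rate claimed in the theorem.

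The technical heart of the proof is to prove these anisotropic estimates. Following the hint preceding the statement, I would split
\[
\psi - \EAvTaylor{2p}{\psi} \;=\; \bigl(\psi - \AvTaylor{p+1}{\psi}\bigr) \,-\, R,
\]
where $R := \EAvTaylor{2p}{\psi} - \AvTaylor{p+1}{\psi}$ is the explicit polynomial collecting the extra monomials, indexed by $p+1 \le j_t + \abs{\bjx}$ and $2j_t + \abs{\bjx} \le 2p$. For the first summand I would invoke the standard Bramble--Hilbert estimates for averaged Taylor polynomials on star-shaped sets from~\cite[Ch.~4]{Brenner_Scott_2007}, which in isotropic form deliver the required rates under $\psi \in \ESOBOLEV{p+1}{K}$; this isotropic regularity is implied by the assumed anisotropic regularity whenever $p \ge 1$, since $2p \ge p+1$. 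For $R$, each extra monomial $(\bx-\bz)^{\bjx}(t-s)^{j_t}/(\bjx!\,j_t!)$ has $L^\infty(K)$ bound of order $\hK^{j_t+\abs{\bjx}}$; differentiating it $k_t$ times in $t$ and $\abs{\bkx}$ times in $\bx$ reduces the exponent by exactly $k_t + \abs{\bkx}$, leaving a factor of order at least $\hK^{p+1-k_t-\abs{\bkx}}$. Its coefficient is the average on $B$ of $D^{(\bjx,j_t)}\psi$, which via Cauchy--Schwarz is bounded by $\abs{B}^{-1/2}\Norm{D^{(\bjx,j_t)}\psi}{L^2(K)}$; since $j_t \le p$ and $\abs{\bjx} \le 2p$, this derivative is in turn controlled by $\Norm{\psi}{\ESOBOLEV{p+1}{\Kt;\ESOBOLEV{2p}{\Kx}}}$. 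A direct bookkeeping of the volume factors from $B$ and from the $L^2(K)$ norm of the monomial (both of order $\hK^{d+1}$, by star-shapedness with parameter $\rho$) closes the estimate for $R$. The main obstacle is precisely this bookkeeping: ensuring that the stated hypothesis is calibrated exactly to cover every derivative appearing in $\EAvTaylor{2p}{\psi}$ without silently requiring higher isotropic regularity than the anisotropic assumption provides, and keeping track of how the shape-regularity parameter $\rho$ and $\mathsf{lqu}(\Th)$ enter the final constant $C$.
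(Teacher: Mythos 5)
Your proposal is correct and follows exactly the route the paper indicates: the paper itself gives no detailed proof, only the one-sentence recipe of combining Theorem~\ref{THEOREM::WELL-POSEDNESS}, Propositions~\ref{PROP::DGp-BOUND} and~\ref{PROP::EXTENDED-TAYLOR-POLYNOMIALS} with approximation bounds obtained from $\EAvTaylor{2p}{\psi}=\AvTaylor{p+1}{\psi}+\ORDER{\hK^{p+1}}$, and your choice $\shp|_K=\EAvTaylor{2p}{\psi|_K}$, the weight calibration, the Bramble--Hilbert step, and the direct estimate of the remainder $R$ are a faithful and correctly bookkept expansion of that sketch.
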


\begin{remark}
Theorem~\ref{THM::ERROR-ESTIMATE-QUASI-TREFFTZ} guarantees optimal~$h$-convergence rates, provided that, for~$p > 1$, the solution satisfy stronger regularity assumptions than those required for the exponential complex functions considered in~\cite{Gomez_Moiola_2022}. 
\end{remark}
\section{Basis and dimension of the local polynomial Trefftz space}
\noindent We study the dimension of the space~$\IP^{2p}_{\bT}(K)$ and provide a practical way to construct a basis, which is valid for any space dimension. Given~$p\in \IN$ and~$K = \Kx \times \Kt \in \Th$, let~$\{m_J\}_{J = 1}^{r_{d, 2p}}$ be a basis for the space~$\IP^{2p}{(\Kx)}$, where~$r_{d, 2p} = \dim(\IP^{2p}(\IR^d)) = \binom{2p+d}{d}$. For a fixed~$t_K \in \Kt$, we consider the following set of functions in~$\IP^{2p}_{\bT}(K)$
\begin{equation} 
\label{EQN::TREFFTZ-BASIS}
\mathcal{B}_{2p}^{\bT} := \left\{
b_J^\bT \in \IP^{2p}_{\bT}(K) : b_J(\bx, t_K) = m_J(\bx), \ J = 1, \ldots, r_{d, 2p}
\right\}.
\end{equation}
\begin{proposition}
    For any~$p \in \IN$ and~$K\in \Th$, the set~$\mathcal{B}_{2p}^{\bT}$ constitutes a basis for the space~$\IP^{2p}_{\bT}(K)$.
\end{proposition}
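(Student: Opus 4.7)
The plan is to show that the time-slice map
$\Phi\colon \IP^{2p}_{\bT}(K)\to \IP^{2p}(\Kx)$, defined by $\Phi(w):=w(\cdot,t_K)$, is a linear isomorphism. Once this is in hand, each $b_J^{\bT}$ is well defined and unique as $\Phi^{-1}(m_J)$, and $\mathcal{B}_{2p}^{\bT}$ inherits the basis property from $\{m_J\}_{J=1}^{r_{d,2p}}$; in particular one also obtains the dimension count $\dim\IP^{2p}_{\bT}(K)=r_{d,2p}$.

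For surjectivity of $\Phi$ (which also yields existence of the $b_J^{\bT}$), I would build a right inverse explicitly. Given $m\in\IP^{2p}(\Kx)$, define
$$
w(\bx,t):=\sum_{k=0}^{p}\frac{(t-t_K)^k}{k!}\Big(\tfrac{i}{2}\Deltax\Big)^{k}m(\bx),
$$
a sum that terminates because $\Deltax^{p+1}m\equiv 0$ on $\IP^{2p}(\Kx)$. Direct substitution into the Schr\"odinger operator produces a telescoping cancellation, giving $i\partial_t w+\tfrac12\Deltax w=0$ together with $w(\cdot,t_K)=m$. The crucial point is the \emph{total} polynomial degree of $w$ in $(\bx,t)$: the $k$-th summand has spatial degree at most $2p-2k$ and temporal degree $k$, hence total degree at most $2p-k\le 2p$; therefore $w\in\IP^{2p}_{\bT}(K)$.

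For injectivity, let $w\in\IP^{2p}_{\bT}(K)$ with $\Phi(w)=0$. Expanding $w$ in its (finite) Taylor series in $t$ around $t_K$ and applying the Trefftz condition $\partial_t w=\tfrac{i}{2}\Deltax w$ iteratively gives $\partial_t^k w(\cdot,t_K)=(\tfrac{i}{2}\Deltax)^k w(\cdot,t_K)=0$ for every $k$, whence $w\equiv 0$. Combined with surjectivity, this makes $\Phi$ a linear isomorphism and closes the argument. The only step requiring genuine attention is the total-degree bound in the construction, but it is forced by the one-for-two trade-off between each factor $(t-t_K)$ and each application of $\Deltax$, i.e.\ the same mechanism underlying the extended Taylor polynomials of Proposition~\ref{PROP::EXTENDED-TAYLOR-POLYNOMIALS}.
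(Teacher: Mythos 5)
Your proof is correct. It rests on the same underlying fact as the paper's argument --- namely that an element of $\IP^{2p}_{\bT}(K)$ is uniquely determined by its trace on the slice $t=t_K$ --- but you realize this differently. The paper works at the level of scaled-monomial coefficients: the Trefftz condition becomes the recurrence~\eqref{eq:recurring}, which determines all coefficients $C_{\bjx,j_t}$ with $j_t\ge 1$ from the $C_{\bjx,0}$, giving injectivity of the restriction map; spanning and linear independence of $\mathcal{B}_{2p}^{\bT}$ then follow from those of $\{m_J\}$. You instead exhibit the inverse of the restriction map in closed form, $m\mapsto\sum_{k=0}^{p}\frac{(t-t_K)^k}{k!}\bigl(\tfrac{i}{2}\Deltax\bigr)^{k}m$, which is precisely the solution of that recurrence written as a (terminating) operator exponential. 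Your route has two advantages: it makes the surjectivity of the slice map --- i.e.\ the very existence of the functions $b_J^{\bT}$, which the paper takes for granted when defining the set in~\eqref{EQN::TREFFTZ-BASIS} and only justifies a posteriori via the recurrence --- completely explicit, and your total-degree bound $2p-2k+k\le 2p$ isolates cleanly why degree $2p$ (rather than $p$) is the right anisotropic threshold, echoing the mechanism of Proposition~\ref{PROP::EXTENDED-TAYLOR-POLYNOMIALS}. The paper's coefficient recurrence, on the other hand, is what one actually implements to build the basis numerically, so the two presentations are complementary.
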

\begin{proof}
Let~$q_{2p}^{\bT} \in \IP^{2p}_{\bT}(K)$. Then, $q_{2p}^{\bT}$ can be expressed in the scaled monomial basis as
\begin{equation*}
    q_{2p}^{\bT}(\bx, t) = \sum_{\abs{\bj} \leq 2p} C_{\bj} \left(\frac{\bx - \bx_K}{\hKx}\right)^{\bjx} \left(\frac{t - t_K}{\hKt}\right)^{j_t}.
\end{equation*}
Since~$q_{2p}^{\bT} \in \bT(K)$, the complex coefficients~$\{C_{\bj}\}_{\abs{\bj}\leq 2p}$ must satisfy the following recurrence relation 
\begin{equation}\label{eq:recurring}
  C_{\bjx, j_t + 1} = 
  \begin{cases}
\frac{i \hKt}{2(j_t + 1)\hKx^2} \sum_{\ell = 1}^d  (\bjxl + 1)(\bjxl + 2) C_{\bjx + 2\be_\ell, j_t}  & \text{ if } \abs{\bjx} \leq 2p - 2, \\
0 & \text{ otherwise}.
\end{cases}
\end{equation}
As a consequence, once the coefficients~$C_{\bjx, 0}$ are fixed, the remaining coefficients are determined by~\eqref{eq:recurring}.
This implies that each element in~$\IP^{2p}_{\bT}(K)$ is uniquely identified by its restriction to~$t = t_K$. 

On the other hand, by the definition of~$\mathcal{B}_{2p}^\bT$, there exists some coefficients~$\{\gamma_J\}_{J = 1}^{r_{d, 2p}} \subset \IC$ such that
\begin{equation*}
q_{2p}^{\bT}(\bx, t_K) = \sum_{J = 1}^{r_{d, 2p}} \gamma_J m_J(\bx) = \sum_{J = 1}^{r_{d, 2p}} \gamma_J b_J(\bx, t_K) \quad \Longrightarrow \quad 
q_{2p}^{\bT} = \sum_{J = 1}^{r_{d, 2p}} \gamma_J b_J.
\end{equation*}
Finally, the linear independence of the set~$\mathcal{B}_{2p}^{\bT}$ follows from the restriction of its elements to~$t = t_K$, and the linear independence of the basis~$\{m_J\}_{J = 1}^{r_{d, 2p}}$.
\end{proof}
 
To construct a basis for the local polynomial Trefftz space~$\mathcal{B}_{2p}^{\bT}$ one can now employ the recurrence relation~\eqref{eq:recurring}.
To initialize the recursion, one chooses a basis~$\{m_J\}_{J = 1}^{r_{d, 2p}}$ of~$\IP^{2p}(\Kx)$.
Natural choices are (scaled and/or translated) monomials, Legendre, or Chebyshev
polynomials. For instance, if~$d = 1$ and we set~$\{m_J\}_{J = 1}^{r_{1, 2p}}$ as monomials, then~$\IP^2_{\bT} = \spn\{1, x, it + x^2\}$ and~$\IP^4_{\bT} = \spn\{1, x, it + x^2, x^3+3ixt, x^4+6ix^2t-3t^2\}$. 

\section{Numerical results}
\noindent We validate the error estimate of Theorem~\ref{THM::ERROR-ESTIMATE-QUASI-TREFFTZ}, and assess numerically some additional features of the method. 

\subsection{Smooth solution}
\noindent On the space--time domain $\QT = (0, 1) \times (0, 1)$ we consider a manufactured~$(1+1)$-dimensional Schr\"odinger equation with initial and Dirichlet boundary conditions so that the exact solution is given by the complex wave function~$\psi(x, t) = \exp\left(\kappa x + \frac{i\kappa^2t}{2}\right),$
for some given~$\kappa \in \IR$.
For the construction of the Trefftz basis functions~$\{b_J^{\bT}\}_{J=1}^{r_{1, 2p}}$, we consider two choices of~$\{m_J\}_{J = 1}^{r_{1, 2p}}$ in~\eqref{EQN::TREFFTZ-BASIS}:
\begin{subequations}
\begin{align}
\label{EQN::CHOICE-BASIS-1}
m_J(x) & := \left(\frac{x - x_K}{h_x}\right)^J \qquad J = 1, \ldots, 2p + 1, \\
\label{EQN::CHOICE-BASIS-2}
m_J(x) & := \frac{(x - x_K)^J}{h_x^{\lfloor J/2 \rfloor}} \quad \qquad J = 1, \ldots, 2p + 1,
\end{align}
\end{subequations}
where~$\lfloor \cdot \rfloor$ is the floor function. 
The remaining coefficients~$C_{\bj}$ are computed with the relations~$\eqref{eq:recurring}$. No difference in terms of accuracy have been observed for these two choices.

In Figure~\ref{FIG::FREE-PARTICLE-H}, we show the errors of method~\eqref{EQN::VARIATIONAL-DG} in the DG norm obtained for~$k = 5$ and a sequence of meshes with $h_t = h_x = 0.1 \times 2^{-j}, \ j = 0, \ldots, 4$. 
Optimal rates of convergence of order~$\ORDER{h^{p}}$ are observed as predicted by Theorem~\ref{THM::ERROR-ESTIMATE-QUASI-TREFFTZ}. 
In Figure~\ref{FIG::FREE-PARTICLE-P}, we observe exponential convergence of order~$\ORDER{\text{e}^{-b N_{DoFs}}}$ for the~$p$-version of the method, i.e., by fixing the space--time mesh and increasing the degree of accuracy~$p$.
Such an exponential convergence was also observed for the non-polynomial Trefftz space considered in~\cite{Gomez_Moiola_2022}. 
In Figures~\ref{FIG::FREE-PARTICLE-CONDITIONING-1} and~\ref{FIG::FREE-PARTICLE-CONDITIONING-2}, we show the condition number of the matrices stemming from method~\eqref{EQN::VARIATIONAL-DG} for different values of~$p$ and both choices of~$\{m_J\}_{J = 1}^{r_{1, 2p}}$, which grow as~$\ORDER{h^{-(2p +1)}}$ and~$\ORDER{h^{-1}}$ for the choices in~\eqref{EQN::CHOICE-BASIS-1} and~\eqref{EQN::CHOICE-BASIS-2}, respectively.
The former behaviour was observed when an arbitrary choice of the parameters defining the basis functions of the non-polynomial Trefftz space was used in~\cite{Gomez_Moiola_2022}, while the latter behaviour was observed for a choice that produces an orthogonal basis, see~\cite{Gomez_Moiola_2023}. 
This highlights the relevance of using suitable bases for the discrete Trefftz spaces.
\begin{figure}[!ht]
    \centering
    \subfloat[$h$-convergence \label{FIG::FREE-PARTICLE-H}]{
        \includegraphics[width = .45\textwidth]{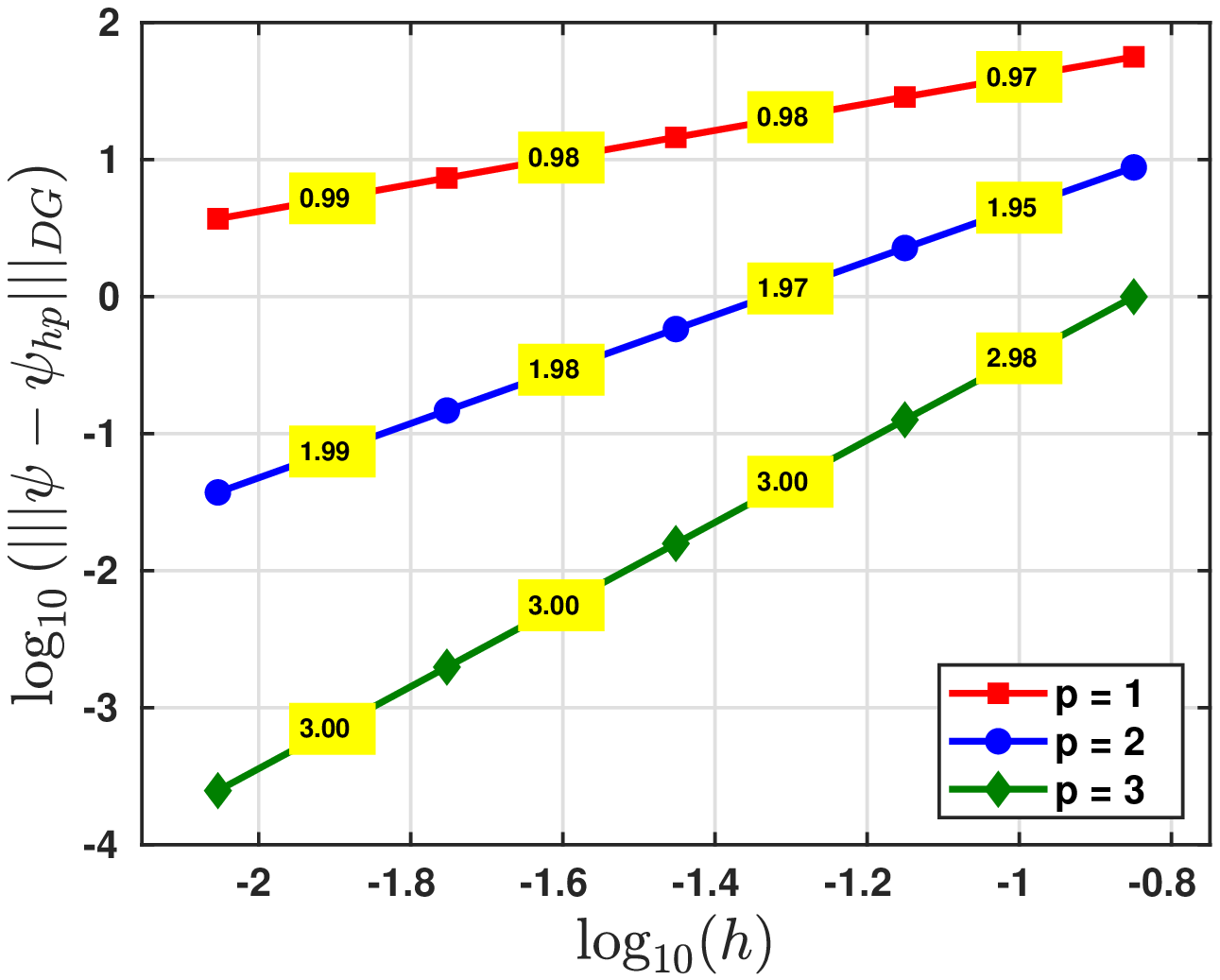}
    }
    \hspace{0.1in}
    \subfloat[$p$-convergence \label{FIG::FREE-PARTICLE-P}
    ]{
        \includegraphics[width = .45\textwidth]{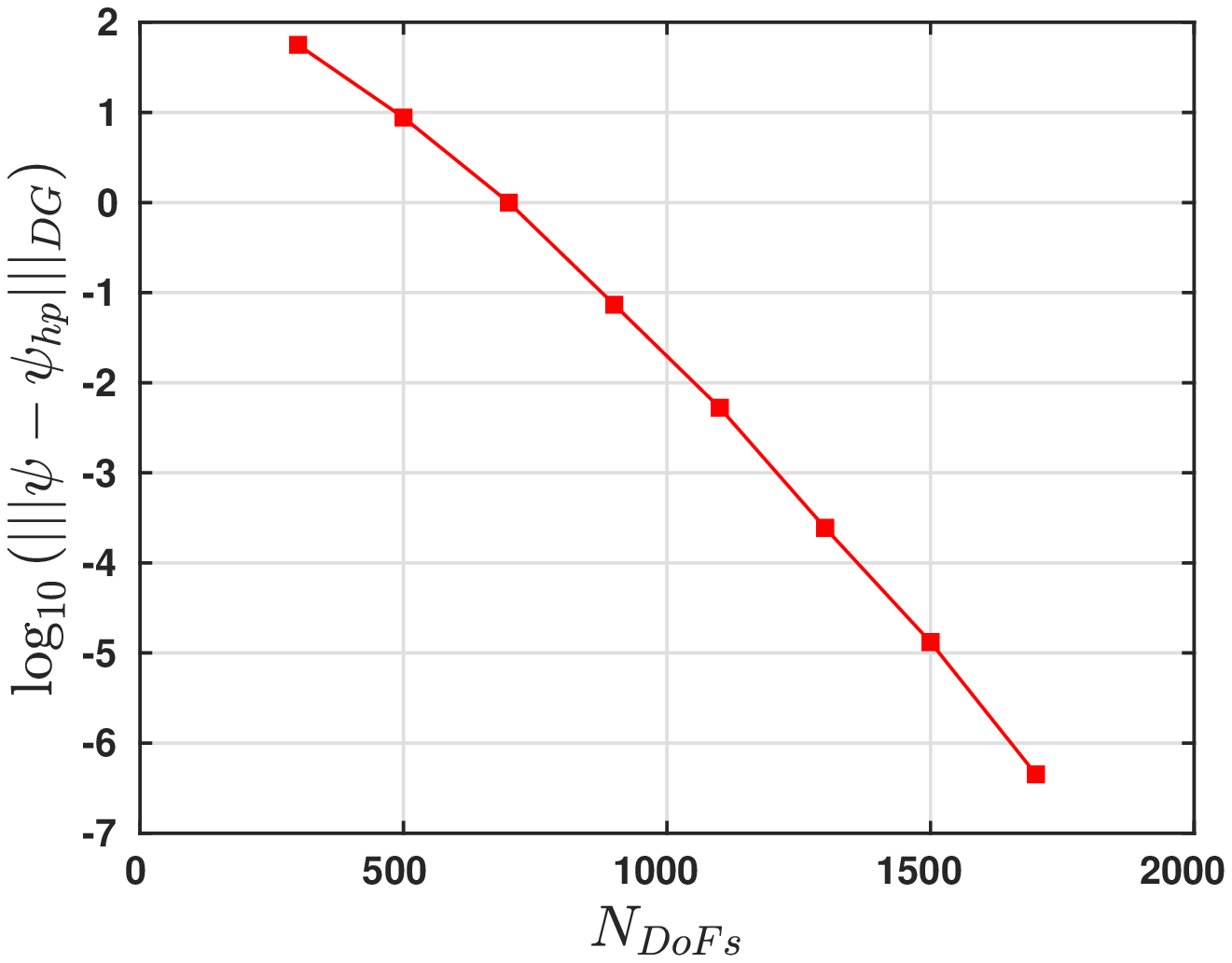}
    }\\
    \subfloat[Conditioning for~$\{m_J\}_{J = 1}^{r_{1, 2p}}$ in~\eqref{EQN::CHOICE-BASIS-1} \label{FIG::FREE-PARTICLE-CONDITIONING-1}]{
        \includegraphics[width = .45\textwidth]{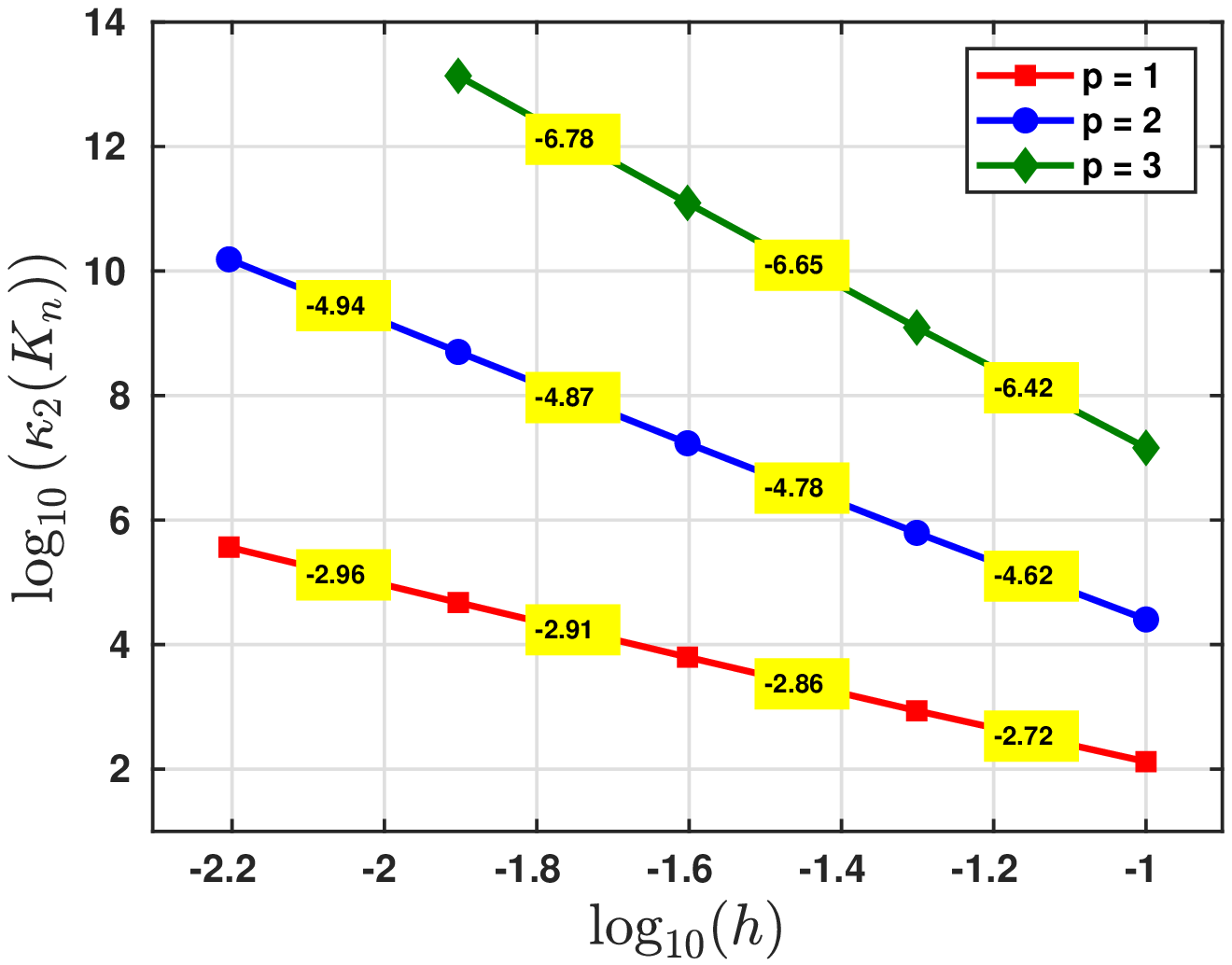}
    }
    \hspace{0.1in}
    \subfloat[Conditioning for~$\{m_J\}_{J = 1}^{r_{1, 2p}}$ in~\eqref{EQN::CHOICE-BASIS-2} \label{FIG::FREE-PARTICLE-CONDITIONING-2}]{
        \includegraphics[width = .45\textwidth]{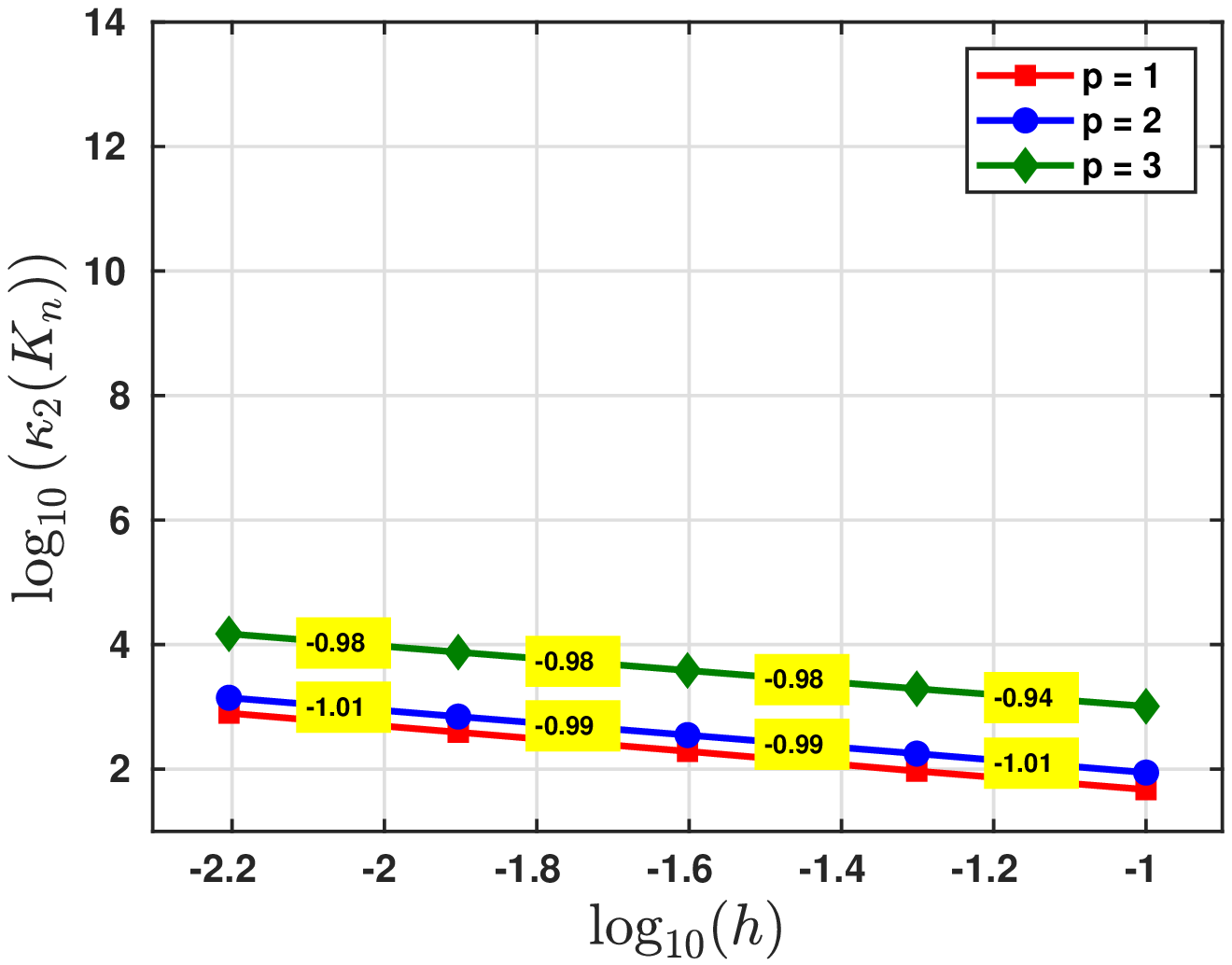}
    }
    \caption{Numerical results for the~$(1+1)$-dimensional problem with exact solution~$\psi(x, t) = \exp\left(\kappa x + \frac{i\kappa^2t}{2}\right)$, for~$\kappa = 5$. \label{FIG::SQUARE-WELL}} 
\end{figure}

\subsection{Singular solution}
\noindent We consider the~$(1+1)$-dimensional problem on the space--time domain~$\QT = (0, 1) \times (0, 0.1)$ with homogeneous Dirichlet boundary conditions, and 
initial condition~$\psi_0(x) = Ax(1 - x)$ with normalization constant~$A = \sqrt{30}$. The exact solution is given by (see~\cite[Example~2.2, Ch.~2]{Griffiths_2018})
\begin{equation}  
\label{EQN::SINGULAR-SOL}
\displaystyle \psi(x, t) = \sqrt{30} \left(\frac{2}{\pi}\right)^3 \sum_{n = 1}^{\infty} \frac{1}{(2n+1)^3} \sin\big((2n + 1)\pi x\big) e^{\frac{-i(2n +1)^2 \pi^2}{2}},
\end{equation}
which belongs to~$H^{\frac54-\epsilon}(0, T; H_0^1(\Omega))$ for all~$\epsilon > 0$; cf.~\cite[\S~7.1]{Schotzau_Schwab_2000}. 
In Figure~\ref{FIG::SINGULAR-SOL}, we show the errors in the~DG norm obtained for a sequence of meshes with~$h_t = 0.1 h_x = 0.05 \times 2^{-j}$, $j = 0, \ldots, 4$, and different discrete spaces: the polynomial Trefftz space~$\IP_{\bT}^{2p}$ (Fig.~\ref{FIG::SINGULAR-POLY-TREFFTZ}); the quasi-Trefftz polynomial space introduced in~\cite{Gomez_Moiola_2023} (Fig.~\ref{FIG::SINGULAR-QUASI-TREFFTZ}); the full polynomial space~$\IP^p$ (Fig.~\ref{FIG::SINGULAR-FULL-POLY}); the pseudo-plane wave Trefftz space introduced in~\cite{Gomez_Moiola_2022} (Fig.~\ref{FIG::SINGULAR-PW}). We present some details for this experiment:
\begin{itemize}
\item The polynomial quasi-Trefftz space~$\mathbb{Q}\mathbb{T}^p(K)$ was defined in~\cite{Gomez_Moiola_2023} as
\begin{equation*}
\mathbb{Q}\mathbb{T}^p(K) := \left\{q_p \in \IP^p(K) : D^{\bj} \calS q_p (x_K, t_K) = 0, \ \abs{\bj} \leq p - 2\right\},
\end{equation*}
for some~$(x_K, t_K) \in K$, which in this experiment is set as the center of~$K$. In~$(1+1)$ dimensions, the quasi-Trefftz space~$\QTrefftz{p}{K}$ has the same dimension ($2p+1$) as~$\IP_{\bT}^{2p}(K)$, but it does not reduce to a polynomial Trefftz space when the potential is zero.
\item In~$(1+1)$ dimensions, the pseudo-plane wave Trefftz space~$\bTp^p(K)$ in~\cite{Gomez_Moiola_2022} is given by
\begin{equation*}
\bTp^p(K) := \text{span}\left\{\phi_{\ell} = \exp\Big(i\big(k_{\ell}x - \frac12 k_{\ell}^2t\big)\Big), \ \ell = 1, \ldots, 2p+1\right\},
\end{equation*}
for some real parameters~$k_{\ell}$. In this numerical experiment, we have set~$k_{\ell} = -2p + 2(\ell - 1),$ for $\ell = 1, \ldots, 2p+1$. For this space, the error for the finest mesh and~$p = 2$ could not be computed due to the ill-conditioning of its stiffness matrix.
\item For the quasi-Trefftz and full-polynomial spaces, we have used the ultra-weak space--time DG variational formulation in~\cite{Gomez_Moiola_2023}, which extends the one in~\eqref{EQN::VARIATIONAL-DG} to non-Trefftz discrete spaces.
\item To compute the errors, we have truncated the series in~\eqref{EQN::SINGULAR-SOL} to~$n = 250$.
\end{itemize}
Slightly reduced rates of convergence are observed for the polynomial and the pseudo-plane wave Trefftz spaces, compared to the quasi-Trefftz and full-polynomial spaces; however, for the above parameter choices, the discrete Trefftz spaces give the smallest errors. 
Such a reduction of the convergence rates is expected for the polynomial Trefftz space, as the error estimate in Theorem~\ref{THM::ERROR-ESTIMATE-QUASI-TREFFTZ} requires stronger regularity assumptions in space on the exact solution. This motivates to look for sharper error estimates, especially for singular problems.

\begin{figure}[!ht]
    \centering
    \subfloat[Polynomial Trefftz space \label{FIG::SINGULAR-POLY-TREFFTZ}]{
        \includegraphics[width = .45\textwidth]{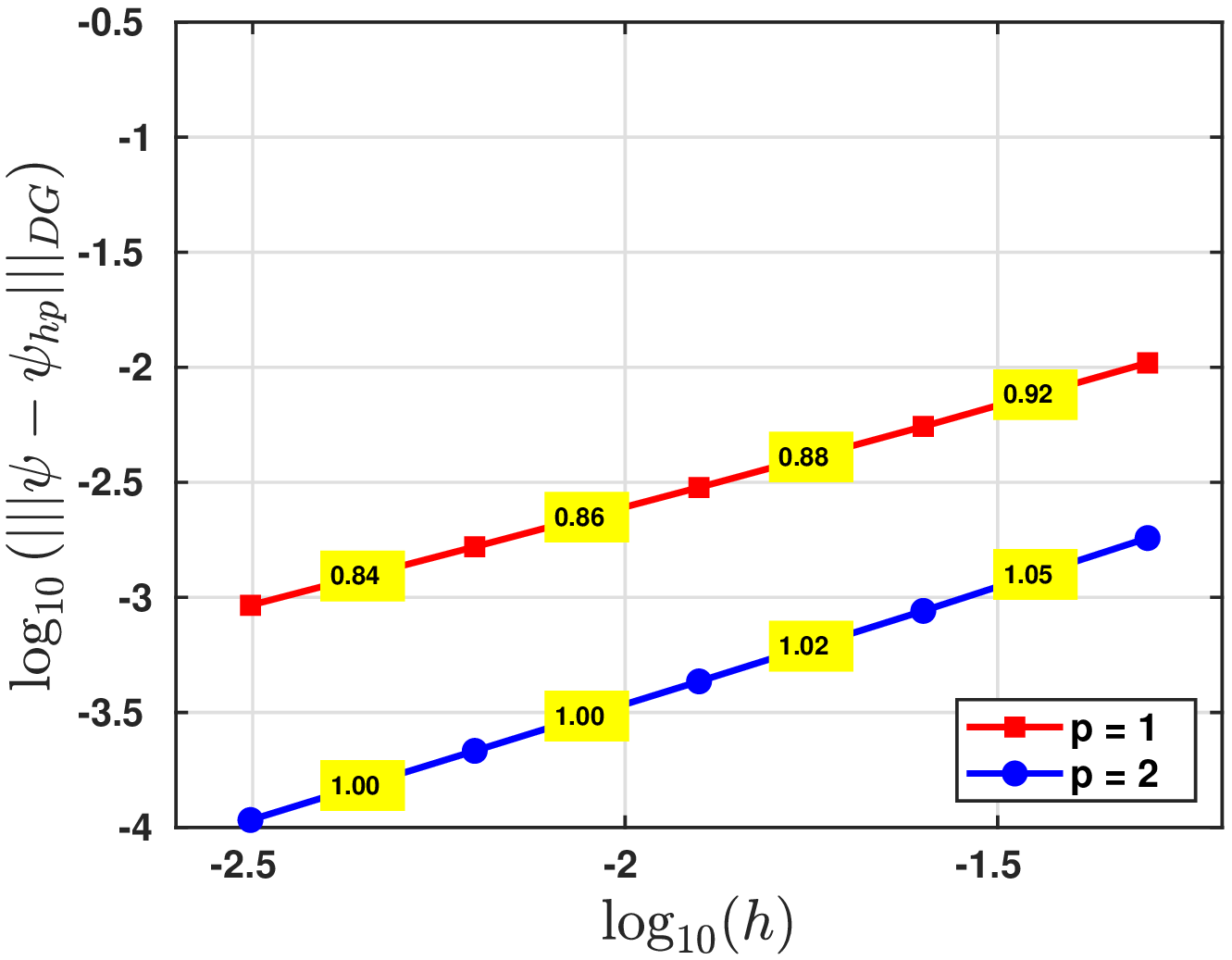} 
    } 
    \hspace{0.1in}
    \subfloat[Quasi-Trefftz space \label{FIG::SINGULAR-QUASI-TREFFTZ}
    ]{
        \includegraphics[width = .45\textwidth]{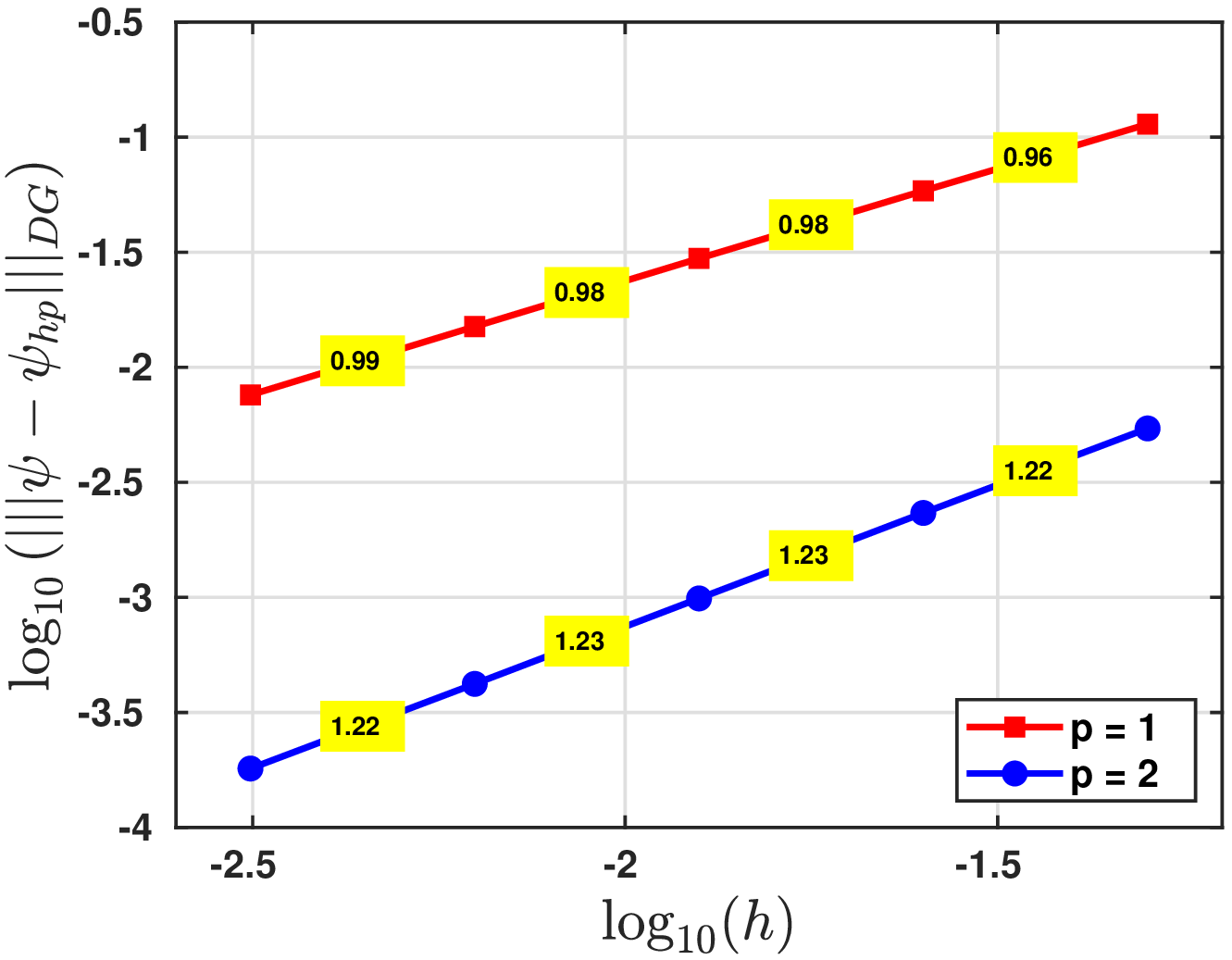}
    }\\
    \subfloat[Full polynomial space \label{FIG::SINGULAR-FULL-POLY}]{
        \includegraphics[width = .45\textwidth]{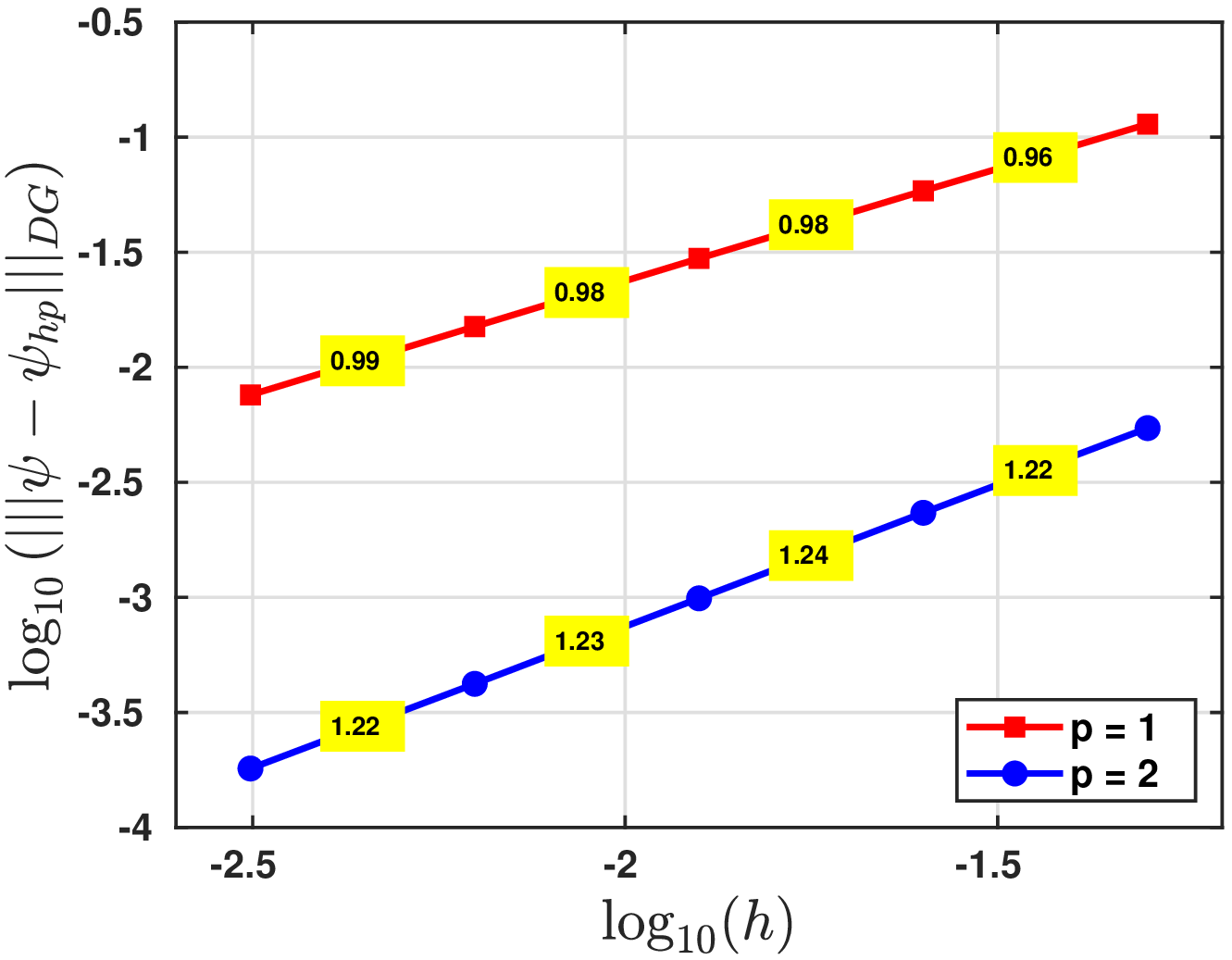}
    } 
    \hspace{0.1in}
    \subfloat[Pseudo-plane wave Trefftz space \label{FIG::SINGULAR-PW}]{
        \includegraphics[width = .45\textwidth]{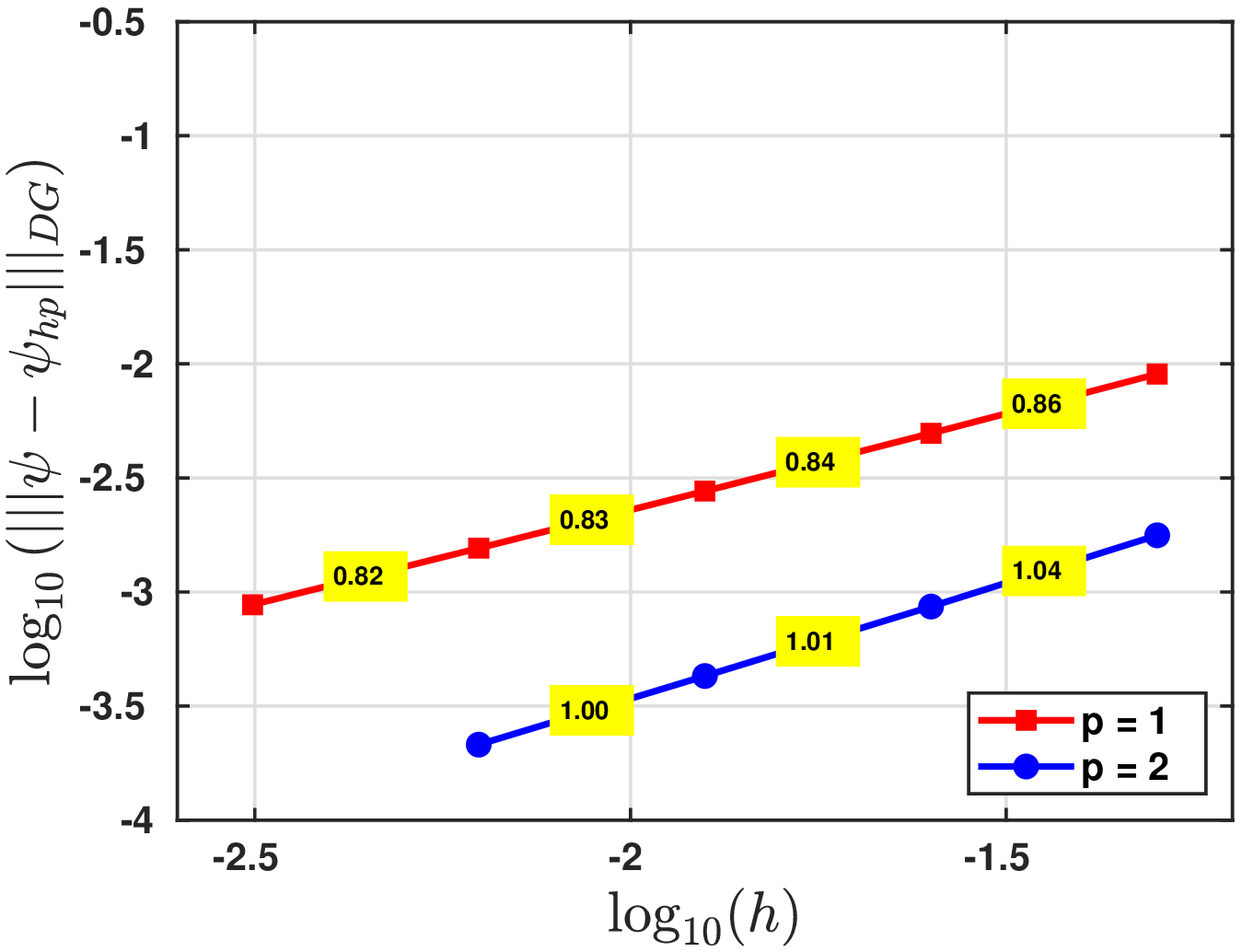}
    }
    \caption{$h$-convergence for the~$(1+1)$-dimensional problem with singular solution~$\psi$ in~\eqref{EQN::SINGULAR-SOL} and different discrete spaces. \label{FIG::SINGULAR-SOL}} 
\end{figure}
\section{Conclusions}
 \noindent We have studied the approximation properties of polynomial Trefftz spaces for the time-dependent linear Schr\"odinger equation in any space dimension~$d\in \IN$.
We have proven that, if Trefftz polynomials of degree~$2p$ are used in the space--time Trefftz-DG formulation of~\cite{Gomez_Moiola_2022},  optimal~$h$-convergence of order~$\ORDER{h^p}$ is obtained for the error in a mesh-dependent norm. For~$d = 1$, the dimension of the polynomial Trefftz space of degree~$2p$ is always smaller than that of the full polynomial space of degree~$p$. However, for~$d > 1$, the reduction in the total number of degrees of freedom
takes place only for large values of~$p$, e.g., $p > 7$ for~$d = 2$ and~$p > 24$ for~$d = 3$. 

\section*{Acknowledgements}
\noindent We are very grateful to Matteo Ferrari for his suggestion to use~\eqref{EQN::CHOICE-BASIS-2} in order to reduce the condition number of the stiffness matrices.


\end{document}